\newcolumntype{C}{>{$}c<{$}}
\newcolumntype{R}{>{$}r<{$}}
\newcolumntype{L}{>{$}l<{$}}
\theoremstyle{plain}
\newtheorem{thm}{Theorem}[section]
\newtheorem{lem}[thm]{Lemma}
\newtheorem{pro}[thm]{Proposition}
\theoremstyle{remark}
\newtheorem{rem}[thm]{Remark}
\newtheorem{dfn}[thm]{Definition}
\newtheorem*{question*}{Question}
\newtheorem*{acknowledgements}{Acknowledgements}
\newcommand{\MSf}[1]{\footnote{}}
\newcommand{\TBf}[1]{\footnote{}}
\newcommand{\N}{\mathbb{N}}
\newcommand{\Z}{\mathbb{Z}}
\newcommand{\Q}{\mathbb{Q}}
\newcommand{\CC}{\mathbb{C}}
\newcommand{\R}{\mathbb{R}}
\newcommand{\tensor}{\otimes}
\newcommand{\bsL}{\boldsymbol{L}}
\newcommand{\bsf}{\boldsymbol{f}}
\newcommand{\bss}{\boldsymbol{s}}
\newcommand{\bsH}{\boldsymbol{H}}
\newcommand{\bsM}{\boldsymbol{M}}
\renewcommand{\epsilon}{\varepsilon}
\renewcommand{\phi}{\varphi}
\newcommand\restrict[1]{\raisebox{-.5ex}{\ensuremath{|_{#1}}}}
\newcommand{\unifmax}{r}
\DeclareMathOperator{\im}{im}
\author{Tomer Bauer} 
\author{Michael M.~Schein} \address{Department of Mathematics, Bar-Ilan University, Ramat Gan 5290002, Israel}
\subjclass[2020]{11M41, 11S80, 20E07}
\keywords{Ideal growth, subgroup growth, normal subgroup zeta functions, quiver representations, $p$-adic integration, rationality}
\begin{document}

 \title[Ideal growth in amalgamated powers]{Ideal growth in amalgamated powers of nilpotent rings of class two and zeta functions of quiver representations} 

\begin{abstract}
Let $L$ be a nilpotent algebra of class two over a compact discrete valuation ring $A$ of characteristic zero or of sufficiently large positive characteristic.  Let $q$ be the residue cardinality of $A$.  The ideal zeta function of $L$ is a Dirichlet series enumerating finite-index ideals of $L$.  We prove that there is a rational function in $q$, $q^m$, $q^{-s}$, and $q^{-ms}$ giving the ideal zeta function of the amalgamation of $m$ copies of $L$ over the derived subring, for every $m \geq 1$, up to an explicit factor.  More generally, we prove this for the zeta functions of nilpotent quiver representations of class two defined by Lee and Voll, and in particular for Dirichlet series counting graded submodules of a graded $A$-module.   If the algebra $L$, or the quiver representation, is defined over $\Z$, then we obtain a uniform rationality result.
\end{abstract}

 \maketitle

\section{Introduction}
\subsection{Amalgamated powers}
Let $A$ be a commutative unital ring.  The following definitions are tailored to the needs of this paper.  By an $A$-algebra we mean a free $A$-module $L$ of finite rank equipped with an $A$-bilinear multiplication $[ \ , \, ] \colon L \times L \to L$.  We consider nilpotent $A$-algebras of class at most two, namely those for which the inclusion $[L, L] \leq Z(L) = \{ x \in L : \forall y \in L, \, [x,y] = [y,x] = 0 \}$ holds.  Here $[L,L]$ is the $A$-submodule generated by the image of the multiplication map.  A (two-sided) $A$-ideal of $L$ is an $A$-submodule $I \leq L$ such that $[x,y] \in I$ and $[y,x] \in I$ for all $x \in I$ and $y \in L$.  Suppose that the number $a_n^{\triangleleft A} (L)$ of $A$-ideals of $L$ of index $n$ is finite for every $n \in \N$.  The ideal zeta function, introduced by Grunewald, Segal, and Smith in~\cite{GSS/88}, is the Dirichlet series
\[ \zeta_{L}^{\triangleleft A} (s) = \sum_{I \leq L} [L : I]^{-s} = \sum_{n = 1}^\infty a_n^{\triangleleft A} (L) n^{-s} ,\]
where the sum in the middle term runs over $A$-ideals of finite index and $s$ is a complex variable.  The sequence $a_n^{\triangleleft A} (L)$ grows polynomially in $n$ if the ring $A$ is either finitely generated (as a $\Z$-algebra) or semi-local~\cite[Theorem~1]{Segal/97}; in these cases, the zeta function $\zeta_{L}^{\triangleleft A} (s)$ converges on some right half-plane of $\mathbb{C}$.

We write $\N$ for the set $\{1, 2, \dots \}$ and define $\N_0 = \N \cup \{ 0 \}$.
Let $m \in \N$, and let $J \leq L$ be an ideal.  Define the {\emph{(direct) amalgamated $m$-th power}} $L^{\ast m}_J$ to be the amalgamated product of $m$ copies of $L$, with the copies of $J$ identified.  More precisely, given $x \in L$ and $1 \leq i \leq m$, set $e_i(x) \in L^m$ to be the $m$-tuple with $x$ in the $i$-th component and $0$ elsewhere.  Then define
\[ L^{\ast m}_J = L^m / \left\langle e_i(x) - e_j(x) : x \in J, 1\leq i, j \leq m \right\rangle.\]
We write $L^{\ast m}$ for $L^{\ast m}_{[L,L]}$.
A motivation of this work is to study the behavior of ideal zeta functions with respect to the operation of taking amalgamated powers.  

In this paper we focus on the case where $A$ is a discrete valuation ring that is compact or, equivalently, complete with finite residue field.  Such rings are precisely the valuation rings of local fields~\cite[\S II.4-5]{Serre/62}.  Let $q$ be the residue cardinality of $A$.
For the abelian $A$-algebra $A^n$ with $[x,y] = 0$ for all $x, y \in A^n$, it is known (cf.~\cite[Proposition~1.1]{GSS/88}) that
\begin{equation} \label{equ:abelian}
\zeta_{A^n}^{\triangleleft A} (s) = \prod_{i = 0}^{n - 1} (1 - q^{i - s})^{-1}.
\end{equation}
An $A$-Lie algebra is an $A$-algebra in which the multiplication is anti-symmetric and satisfies the Jacobi identity.  Consider now the Heisenberg $\Z$-Lie algebra $\mathcal{H} = \langle x, y, z \rangle_{\Z}$ with product determined by $[x,y] = z$ and $z \in Z(\mathcal{H})$.  In this case (cf.~\cite[Proposition~8.4]{GSS/88}):
\begin{equation} \label{equ:heisenberg}
 \zeta^{\triangleleft A}_{(\mathcal{H} \tensor_{\Z} A)^{\ast m}} (s) = \zeta^{\triangleleft A}_{A^{2m}} (s) \frac{1}{1 - q^{2m - (2m + 1)s}}.
 \end{equation}
 The ideal zeta functions $\zeta^{\triangleleft A}_{(\mathcal{H}^r \tensor_{\Z} A)^{\ast m}}(s)$, for $r \in \N$, were computed by the first author~\cite{Bauer/13}; while they are considerably more complicated than the right-hand side of~\eqref{equ:heisenberg}, the dependence on $m$ is of the same type.  See~\cite[\S 5.4]{CSV/19} for further variations.
It is thus natural to ask whether the form of the dependence of $ \zeta^{\triangleleft A}_{(\mathcal{H} \tensor_{\Z} A)^{\ast m}} (s)$ on $m$ exhibited in~\eqref{equ:heisenberg} is a general phenomenon.  We prove that it is.

\begin{thm} \label{thm:local}
Let $n, d \in \N$.  There exists $M_{n,d} \in \N$ such that the following holds:

Let $A$ be a compact discrete valuation ring either of characteristic zero or of characteristic at least $M_{n,d}$.  Let $q$ be the residue cardinality of $A$.  Let $L$ be a nilpotent $A$-algebra of class at most two such that $L / [L,L]$ is a free $A$-module of rank $n$ and $[L,L]$ is a free $A$-module of rank $d$.  There exists a rational function $W_L \in \Q(X, Y_0, Y_1, Y_2)$ such that
\[ \zeta^{\triangleleft A}_{L^{\ast m}} (s) = \zeta^{\triangleleft A}_{A^{mn}}(s) W_L (q, q^m, q^{-s}, q^{-ms})\]
for all $m \in \N$ and all $s \in \CC$ with $\mathrm{Re} \, s \gg_m 0$.  The function $W_L$ may be expressed with a denominator which is a product of factors of the form $(1 - X^{b} Y_0^{a_0} Y_1^{a_1} Y_2^{a_2})$ with $a_0, a_1, a_2 \in \N_0$ and $b \in \Z$.
\end{thm}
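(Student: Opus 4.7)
The plan is to factorize $\zeta^{\triangleleft A}_{L^{\ast m}}(s)$ as $\zeta^{\triangleleft A}_{A^{mn}}(s)$ times a generating series over finite-index sublattices $N\le [L,L]$, isolating the $m$-dependence so that the remaining rationality statement is independent of the amalgamation. Since $L^{\ast m}/[L^{\ast m},L^{\ast m}]\cong (L/[L,L])^m$ and $[L^{\ast m},L^{\ast m}]=[L,L]$, with the commutator pairing $[(\bar x_i),(\bar y_i)] = \sum_{i=1}^m[\bar x_i,\bar y_i]_L$ block-diagonal across the $m$ copies, a finite-index ideal $I\le L^{\ast m}$ is determined by a pair $(M,N)$ of full-rank lattices $M\le (L/[L,L])^m$ and $N\le [L,L]$ satisfying $[L^{\ast m},M]\subseteq N$, together with a splitting counted by $|\Hom(M,[L,L]/N)| = [[L,L]:N]^{mn}$. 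Block-diagonality renders the commutator condition equivalent to $M\subseteq Z_N^m$, with $Z_N = \{x\in L/[L,L] : [x,L/[L,L]]_L\subseteq N\}$ a full-rank sublattice of $L/[L,L]$ of rank $n$ depending only on $N$ and the commutator structure of $L$.

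Because $Z_N^m\cong A^{mn}$, the inner sum over $M\subseteq Z_N^m$ evaluates by~\eqref{equ:abelian} to $[L/[L,L]:Z_N]^{-ms}\cdot\zeta^{\triangleleft A}_{A^{mn}}(s)$, and assembling the outer sum over $N$ yields
\[
\zeta^{\triangleleft A}_{L^{\ast m}}(s) \;=\; \zeta^{\triangleleft A}_{A^{mn}}(s)\cdot\sum_{N\le [L,L]}[[L,L]:N]^{mn-s}\,[L/[L,L]:Z_N]^{-ms}.
\]
Setting $a(N) = \val_q[[L,L]:N]$ and $b(N) = \val_q[L/[L,L]:Z_N]$, each summand equals $Y_0^{n\,a(N)}Y_1^{a(N)}Y_2^{b(N)}$ under the substitution $(X,Y_0,Y_1,Y_2) = (q,q^m,q^{-s},q^{-ms})$. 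The theorem thus reduces to proving that the generating series
\[
W_L(X,Y_0,Y_1,Y_2) \;=\; \sum_{N\le A^d}Y_0^{n\,a(N)}Y_1^{a(N)}Y_2^{b(N)}
\]
is a rational function of the required form.

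To establish this, I would represent the sum as a $p$-adic integral over a Hermite-normal-form parametrization $\beta\in\Mat_d(A)$ of the lattices $N$: the integer $a(N)$ is $\val(\det\beta)$, while $b(N)$ is the valuation of a determinantal ideal whose generators are polynomials in the entries of $\beta$ and the structure constants of the commutator (coming from the map $L/[L,L]\to\Hom(L/[L,L],[L,L]/N)$ whose kernel is $Z_N$). Multi-variable Denef rationality then realizes this integral as a rational function in $q$, $q^{mn-s}=Y_0^nY_1$, and $q^{-ms}=Y_2$; the resulting denominator factors, of the form $(1-X^a Y_0^{na_1}Y_1^{a_1}Y_2^{a_2})$, fit the prescribed structure $(1-X^bY_0^{a_0}Y_1^{a_1}Y_2^{a_2})$ with $a_0,a_1,a_2\in\N_0$ and $b\in\Z$. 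The principal obstacle is the uniformity of rationality in the residue characteristic: Denef's theorem requires resolution of singularities (or Pas-style quantifier elimination) for the polynomials defining $Z_N$, available only in characteristic zero or sufficiently large characteristic---this is the origin of the bound $M_{n,d}$. Alternatively, the Lee-Voll framework referenced in the abstract absorbs this uniformity by realizing $L^{\ast m}$ as a specific class-two nilpotent quiver representation and invoking their general rationality theorem for such objects.
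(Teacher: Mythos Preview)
Your approach is correct and takes a genuinely different route from the paper.  The paper does not introduce the lattice $Z_N$ at all.  Instead, it parametrizes ideals of $L^{\ast m}$ by pairs $(\Lambda,N)$ with $\Lambda\le (L/[L,L])^m$ and $N\le [L,L]$, groups the $\Lambda$'s according to the \emph{sum of their projections} $H=\sum_i\pi_i(\Lambda)\le L/[L,L]$, and uses a combinatorial lemma (Lemma~\ref{pro:sum.lattice}) to evaluate $\sum_{\Lambda:\sum\pi_i(\Lambda)=H}[A^{mn}:\Lambda]^{-s}=\frac{\zeta^{\triangleleft A}_{A^{mn}}(s)}{\zeta^{\triangleleft A}_{A^n}(ms)}[A^n:H]^{-ms}$.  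This yields a \emph{double} sum over pairs $(H,N)$ with $[L,H]\subseteq N$ (Proposition~\ref{pro:rewrite}), which is then realized as a $p$-adic integral over good bases of $H$ and $N$ simultaneously (Lemma~\ref{lem:final.integral}).  Your single sum over $N$ arises from the paper's double sum by carrying out the inner sum over $H\le Z_N$ analytically, which produces the factor $\zeta^{\triangleleft A}_{A^n}(ms)[L/[L,L]:Z_N]^{-ms}$ and cancels the $\zeta^{\triangleleft A}_{A^n}(ms)$ in the denominator.

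The trade-off is this: your decomposition is more elementary and bypasses Lemma~\ref{pro:sum.lattice} entirely, at the price of a harder integrand --- the exponent $b(N)=v_q[L/[L,L]:Z_N]$ is not the valuation of a single polynomial or determinantal ideal as you suggest, but rather the index of a preimage lattice, which is governed by elementary divisors of a matrix built from $\beta$ and the structure constants.  It is nonetheless a definable function, so Denef's theorem and its uniform extensions still apply, and your sketch goes through.  The paper's choice to keep $H$ as an integration variable gives a tame monomial integrand $\prod|c_{ii}|^{\text{linear}}$ and makes the passage to the quiver setting of Theorem~\ref{thm:quiver.local} more transparent, though your $Z_N$-method also generalizes to quivers by setting $Z_{\bsM,v}=\{x\in L_{v,1}:f_e(x)\in M_{h(e)}\text{ for all }e\text{ with }t(e)=v\}$.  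One small correction: the ideal condition is two-sided, so $Z_N=\{x:[x,L]+[L,x]\subseteq N\}$.  Finally, the Lee--Voll framework does not by itself supply the amalgamation statement; the paper's Theorem~\ref{thm:quiver.local} is the new quiver-level result, and Theorem~\ref{thm:local} is deduced from it via Remark~\ref{exm:lattices}.
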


For any nilpotent $A$-algebra $L$, the ideal zeta function $\zeta^{\triangleleft A}_{L}(s)$ may be expressed as a rational function in $q^{-s}$ at least when $\mathrm{char} \, A = 0$; see~\cite[Theorem~3.5]{GSS/88} for the first general theorem of this form.  Theorem~\ref{thm:local} may be viewed as a refinement of this result for some nilpotent $A$-algebras of class at most two that makes precise the dependence of $\zeta^{\triangleleft A}_{L^{\ast m}}(s)$ on $m$; see Remark~\ref{rem:explicit.form} below.  The lower bound on $\mathrm{Re} \, s$ in the statement of Theorem~\ref{thm:local} and our other results depends on $m$; see the proof of Theorem~\ref{thm:quiver.main} below.

Since only one residue cardinality $q$ is considered in Theorem~\ref{thm:local}, the numerator of $W_L$ is not uniquely defined.  However,
if the algebra $L$ is defined over $\Z$, and hence may naturally be interpreted by extension of scalars as an algebra over any compact discrete valuation ring $A$, then we prove a version of Theorem~\ref{thm:local} that is uniform over all such $A$ of sufficiently large residue characteristic.

\begin{thm} \label{thm:main}
    Let $L$ be a nilpotent $\Z$-algebra of class at most two, and suppose that $L / [L,L]$ is a free $\Z$-module of rank $n$.  Then there exist $\unifmax, g, M \in \N$, rational functions $W_1, \dots, W_{\unifmax} \in \Q(X,Y_0, Y_1, Y_2)$, and $g$-ary formulae $\psi_{1}, \dots, \psi_{\unifmax}$ in the language of rings, such that the following holds for all compact discrete valuation rings $A$ of residue characteristic at least $M$ and all $m \in \N$:
For all $s \in \CC$ with $\mathrm{Re} \, s \gg_m 0$ we have
\begin{equation*} 
\zeta^{\triangleleft A}_{(L \tensor_{\Z} A)^{\ast m}}(s) = {\zeta^{\triangleleft A}_{A^{mn}}(s)} \sum_{i = 1}^{\unifmax} m_i (k) W_i(q, q^m, q^{-s}, q^{-ms}) ,
\end{equation*}
where $k$ is the residue field of $A$, with $q = |k|$ and 
$m_i (k) = | \{ \xi \in k^{g} : k \models \psi_i(\xi) \} |$.  Moreover, the rational functions $W_1, \dots, W_r$ may be expressed with denominators which are products of factors of the form $(1 - X^{b} Y_0^{a_0} Y_1^{a_1} Y_2^{a_2})$ with $a_0, a_1, a_2 \in \N_0$ and $b \in \Z$.
\end{thm}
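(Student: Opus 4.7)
The plan is to upgrade the proof of Theorem~\ref{thm:local} to a uniform statement by combining its $p$-adic integral representation with a uniform rationality theorem of Denef--Meuser or Pas--Macintyre type. As in the proof of Theorem~\ref{thm:local}, I would first express $\zeta^{\triangleleft A}_{(L \otimes_\Z A)^{\ast m}}(s)$ as a $p$-adic integral on a definable subset of some $A^N$, using the framework of nilpotent quiver representations of class two; here the parameter $m$, which controls the amalgamation, enters the integrand in a controlled way that is ultimately responsible for the variables $q^m$ and $q^{-ms}$ appearing in Theorem~\ref{thm:local}.

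Since $L$ is defined over $\Z$, the structure constants that determine both the domain of integration and the integrand are integers. Reducing modulo $p$ therefore makes sense for every residue characteristic, and, for $p$ large, the entire integral is uniformly definable in $A$ by formulae in the Denef--Pas language: the residue-field data are first-order in the language of rings in some $g$ free variables, while the value-group data are linear. Crucially, the $m$-dependence should be confined to the value-group side of the formula (it arises because $m$ copies of the ``linear'' part are amalgamated along the derived subring), so it does not enter the residue-field formulae.

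To conclude, I would invoke the uniform $p$-adic integration theorem: for $p$ at least some effective $M$, such a Denef--Pas integral evaluates as a finite sum $\sum_{i=1}^{\unifmax} m_i(k) W_i(q, q^m, q^{-s}, q^{-ms})$, where each $m_i(k) = |\{\xi \in k^g : k \models \psi_i(\xi)\}|$ counts the realizations in the residue field of a fixed first-order formula $\psi_i$ in the language of rings, and each $W_i$ is a rational function of the required shape (denominator a product of factors $1 - X^b Y_0^{a_0} Y_1^{a_1} Y_2^{a_2}$). The main obstacle is the separation of variables: one must verify that $m$ enters only through the value-group side of the cell decomposition, so that only the rational functions $W_i$, and not the residue-field formulae $\psi_i$, feel the amalgamation index. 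Once this separation is maintained through the cell decomposition and an application of Fubini, the uniform rationality statement, together with the explicit factor $\zeta^{\triangleleft A}_{A^{mn}}(s)$ extracted exactly as in Theorem~\ref{thm:local}, follows at once.
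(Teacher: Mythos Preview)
Your proposal is essentially the paper's own approach: reduce to the quiver setting, extract the factor $\zeta^{\triangleleft A}_{A^{mn}}(s)$ via the combinatorial Lemma~\ref{pro:sum.lattice} (this is the content of Proposition~\ref{pro:rewrite}), rewrite what remains as a $p$-adic integral over an $\mathfrak{L}_{\mathrm{DP}}$-definable set (Lemma~\ref{lem:final.integral}; the integer structure constants mean no extra constant symbols are needed), and then apply the uniform rationality result Proposition~\ref{pro:bdop}.

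One point of clarification: you frame the ``separation of variables'' as an obstacle to be verified inside the cell decomposition, worrying that $m$ might leak into the residue-field formulae $\psi_i$. The paper sidesteps this entirely. After Lemma~\ref{lem:final.integral} the integral has a fixed domain $Y_{\boldsymbol{L}}$ (independent of $m$) and $m$ appears only linearly in the exponents of the integrand. The paper then simply treats $-m$, $s_v$, and $ms_v$ as \emph{independent} formal complex variables and applies Proposition~\ref{pro:bdop} with $n = 2|V|+1$. Since the domain and the definable functions $\Phi_i$ do not involve $m$ at all, the residue-field formulae produced by the uniform cell decomposition are automatically $m$-free; no separate verification is required. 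Your instinct that $m$ lives on the value-group side is correct, but the mechanism is this trivial promotion of $m$ to a variable, not a delicate tracking through the decomposition.
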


\begin{rem}
If $L = \mathcal{H}$, then~\eqref{equ:heisenberg} fits into the framework of Theorem~\ref{thm:main} as follows: 
\[\zeta^{\triangleleft A}_{(\mathcal{H} \tensor_{\Z} A)^{\ast m}} (s) = \zeta^{\triangleleft A}_{A^{2m}} (s) \, m(k) \, W(q, q^m, q^{-s}, q^{-ms}),\] 
where $m(k) = | \{ \xi \in k: k \models \psi(\xi) \} |$ for $\psi(\eta) = \{ \eta = 0 \}$ and $W(X,Y_0, Y_1, Y_2) = \frac{1}{1 - Y_0^2 Y_1^{} Y_2^{2}}$.
\end{rem}

\begin{rem} \label{rem:explicit.form}
A consequence of Theorems~\ref{thm:local} and~\ref{thm:main} is that taking amalgamated powers does not increase the complexity of ideal zeta functions, apart from the explicit factor $\zeta^{\triangleleft A}_{A^{mn}}(s)$.  More precisly, there exists an expression for $\zeta^{\triangleleft A}_{L^{\ast m}}(s) / \zeta^{\triangleleft A}_{A^{mn}}(s)$ having uniform complexity for all $m$; we cannot exclude the possibility that it admits further cancellation for particular values of $m$.
Indeed, let $L$ be a nilpotent $\Z$-algebra of class at most two as in Theorem~\ref{thm:main}.  Then, for all compact discrete valuation rings $A$ of sufficiently large residue characteristic, the ideal zeta function 
$\zeta^{\triangleleft A}_{L^{\ast m}}(s)$ has the form
\[ \zeta^{\triangleleft A}_{L^{\ast m}}(s) = \zeta^{\triangleleft A}_{A^{mn}}(s)
\frac{\sum_{i = 1}^M \kappa_i q^{a_{i 0}m + a_{i1} - (b_{i0} m + b_{i1})s}}{\prod_{i = 1}^N (1 - q^{c_{i 0}m + c_{i1} - (d_{i0} m + d_{i1})s})}\]
for suitable coefficients $\kappa_i \in \Q$ and $a_{ij}, b_{ij}, c_{ij}, d_{ij} \in \Z$ that are independent of $m$.  Thus, apart from the explicit factor $\zeta^{\triangleleft A}_{A^{mn}}(s)$, the only dependence of $\zeta^{\triangleleft A}_{L^{\ast m}}(s)$ on $m$ is that the exponents of monomials appearing in the rational function are linear in $m$.  

The hypothesis of nilpotency class at most two in our results appears to be necessary.
The ideal zeta functions $\zeta^{\triangleleft \Z_p}_{M_3 \tensor_\Z \Z_p}(s)$ and $\zeta^{\triangleleft \Z_p}_{(M_3 \tensor_\Z \Z_p)^{*2}}(s)$ are computed in Theorems~2.26 and~2.30 of~\cite{duSWoodward/08}; here $M_3$ is a certain nilpotent $\Z$-Lie algebra of class {\emph{three}} and $M_3^{\ast 2}$ is the amalgamation of two copies of $M_3$ over the center.  The latter function is significantly more complex than the former, suggesting that no analogue of Theorem~\ref{thm:main} applies.
\end{rem}

The generality of Theorems~\ref{thm:local} and~\ref{thm:main} stands in stark contrast to the difficulty of computing ideal zeta functions explicitly.  No explicit ideal zeta functions are known for nilpotent $A$-algebras of class five or greater.  Even in class two, the known examples either have small rank, such as those listed in~\cite[\S8]{GSS/88} and~\cite[\S2]{duSWoodward/08}, or have a particular combinatorial structure rendering them tractable, such as those considered in~\cite{duS-ecI/01, ScheinVoll1/15, ScheinVoll2/16, Voll/19, CSV/19}.  The rational functions describing ideal zeta functions of base extensions to compact discrete valuation rings of nilpotent $\Z$-algebras of class two generically satisfy functional equations upon inversion of the variables~\cite[Theorem~C]{Voll/10}.

\subsection{Counting normal subgroups}
Theorem~\ref{thm:main} may be interpreted in the context of counting normal subgroups of finitely generated nilpotent groups.  Indeed, let $G$ be a finitely generated torsion-free nilpotent group.  Grunewald, Segal, and Smith~\cite{GSS/88} introduced the function
\begin{equation} \label{equ:normal.subgroup.zeta}
\zeta^\triangleleft_{G}(s) = \sum_{H \unlhd G} [G:H]^{-s},
\end{equation}
where the sum runs over all normal subgroups of $G$ of finite index.  There is an Euler decomposition $\zeta^\triangleleft_G (s) = \prod_p \zeta^\triangleleft_{G,p} (s)$, where the product runs over all primes and $\zeta^\triangleleft_{G,p} (s)$ is defined as in~\eqref{equ:normal.subgroup.zeta} but counts only the normal subgroups of $p$-power index~\cite[Proposition~1.3]{GSS/88}.
If $G$ is nilpotent of class two, then $L(G) = G/Z(G) \times Z(G)$ with the multiplication $[(g_{1}Z(G),z_{1}), (g_{2}Z(G),z_{2})] = (Z(G), g_1 g_2 g_1^{-1} g_2^{-1})$ is a nilpotent $\Z$-Lie algebra of class two, and there is a one-to-one correspondence between normal subgroups of $G$ and ideals of $L(G)$ that preserves inclusion and index; see~\cite[Proposition~3.1]{BKO/22} for further detail.  Thus $\zeta^\triangleleft_G (s) = \zeta^{\triangleleft \Z}_{L(G)} (s)$ and $\zeta^\triangleleft_{G,p} (s) = \zeta^{\triangleleft \Z_p}_{L(G) \tensor_{\Z} \Z_p}(s)$ for all primes $p$.  Any class-two nilpotent $\Z$-Lie algebra $L$ is isomorphic to $L(G)$ for a suitable group $G$, and it is easy to see that the operation $L^{\ast m}$ corresponds to amalgamating $m$ copies of $G$ over the derived subgroup $[G,G]$.  

\subsection{Comparison with related zeta functions} \label{sec:comparison}
It is natural to ask how zeta functions behave under operations on the underlying objects such as direct product, amalgamation, and base extension.  In this light we observe that
the ``preservation of complexity'' under amalgamated powers in Remark~\ref{rem:explicit.form} contrasts with behavior under base extension: given an $A$-Lie algebra $L$ and an extension $B/A$ of rings such that $B$ is free of finite rank as an $A$-module, we may consider $L \tensor_A B$ as an $A$-Lie algebra via restriction of scalars from $B$ to $A$ and seek to study $\zeta^{\triangleleft A}_{L \tensor_A B} (s)$.  For instance, if $B = A^r$, then $L \tensor_A B = L^r$ is the direct product of $r$ copies of $L$.  The complexity of $\zeta^{\triangleleft A}_{L \tensor_A B} (s)$ appears to grow rapidly with $\mathrm{rk}_A B$.  For the class of Lie algebras considered in~\cite[Theorem~4.21]{CSV/19}, for instance, it is possible to express $\zeta^{\triangleleft A}_{L \tensor_A B} (s)$ as a sum of explicit combinatorially defined functions, where the number of summands grows super-exponentially in $\mathrm{rk}_A B$.

Nearly the opposite behavior occurs if we consider the pro-isomorphic zeta functions
\[ \zeta^{\wedge A}_{L} (s) = \sum_{M \simeq L} [L : M]^{-s},\]
where the sum runs over all $A$-Lie subalgebras $M \leq L$ that are isomorphic to $L$; these are necessarily of finite index.  Let $L$ be a nilpotent $\Z$-Lie algebra, of arbitrary class, such that $Z(L) \leq [L,L]$.  By~\cite[Theorem~1.3(2)]{BGS/22}
there exist formulae $\widehat{\psi}_1, \dots, \widehat{\psi}_r$ and rational functions $\widehat{W}_1, \dots, \widehat{W}_r \in \Q(X,Y_1, Y_2)$ such that, for any number field $K$ of degree $m$ and any prime $p$ such that 
$L \tensor_{\Z} \Z_p$ is rigid in the sense of~\cite[Definition~3.8]{BGS/22}, we have
\begin{equation} \label{equ:pro-isomorphic.rational}
\zeta^{\wedge \Z_p}_{(L \tensor_\Z \Z_p) \tensor_\Z \mathcal{O}_K}(s) = \prod_{\mathfrak{p} | p} \left( \sum_{i = 1}^r \widehat{m}_i (k_{\mathfrak{p}}) \widehat{W}_i(q_{\mathfrak{p}}, q_{\mathfrak{p}}^m, q_{\mathfrak{p}}^{-s}) \right),
\end{equation}
where the product runs over places $\mathfrak{p}$ of $K$ dividing $p$, and $q_{\mathfrak{p}}$ is the cardinality of the residue field $k_{\mathfrak{p}} = \mathcal{O}_K / \mathfrak{p}$.  Here $\widehat{m}_i(k_{\mathfrak{p}}) = | \{ \xi \in k_{\mathfrak{p}}^e : k_{\mathfrak{p}} \models \widehat{\psi}_i(\xi) \} |$.  Interpreting~\eqref{equ:pro-isomorphic.rational} as a rational function in $q_{\mathfrak{p}}$ and $q_{\mathfrak{p}}^{-s}$, this means that the exponents of $q_{\mathfrak{p}}$ depend linearly on $m$, while all other coefficients and exponents are independent of $m$.  

Although~\cite[Theorem~1.3(2)]{BGS/22} involves a fine Euler decomposition, whereas Theorem~\ref{thm:main} includes the factor $\zeta^{\triangleleft A}_{A^{mn}}(s)$, the two theorems and their proofs share a similar spirit.  In both cases, the relevant zeta functions are interpreted as $p$-adic integrals whose domains of integration are definable sets and integrands are powers (possibly depending on $m$) of valuations of definable functions that do not depend on $m$.  The theorems thus become rationality and uniformity results for such integrals and are proved using arguments from the theory of $p$-adic integration.

By contrast, pro-isomorphic zeta functions do not behave well under amalgamated products.  For instance, the functions $ \zeta^{\wedge A}_{(\mathcal{H} \tensor_{\Z} A)^{\ast m}}(s)$ are computed in~\cite[Example~1.4(2)]{duSLubotzky/96} for $m \in \{ 2, 3 \}$ and~\cite[Theorem~5.10]{BGS/22} in general; their complexity appears to grow factorially with $m$.

\subsection{Quiver representations}
Our main results hold in greater generality than presented above.  
Let $Q = (V,E)$ be a finite quiver, namely a finite directed graph, possibly containing loops and multiple edges.  Here $V$ and $E$ are the sets of vertices and edges of $Q$, respectively.  Given an edge $e \in E$, we denote its tail by $t(e)$ and its head by $h(e)$.  
Set $[n] = \{1, 2, \dots, n \}$ for all $n \in \N$.

\begin{dfn} \label{def:quiver.rep}
Let $A$ be a commutative unital ring, and let $Q$ be a quiver.
\begin{enumerate}
\item
An \emph{$A$-representation}
of $Q$ is a pair $(\boldsymbol{L},\boldsymbol{f})$, where
$\bsL = \{ L_v \}_{v \in V}$ is a family of $A$-modules and $\bsf = \{f_e \colon L_{t(e)} \to L_{h(e)} \}_{e \in E}$ is a family of $A$-module homomorphisms.  We occasionally write $\boldsymbol{L}$ for $(\boldsymbol{L},\boldsymbol{f})$, especially in subscripts.
\item
An \emph{$A$-subrepresentation} of $(\boldsymbol{L}, \bsf)$ is a family $\boldsymbol{\Lambda} = \{ \Lambda_v \leq L_v \}_{v \in V}$ of $A$-submodules such that $f_e (\Lambda_{t(e)}) \leq \Lambda_{h(e)}$ for every $e \in E$.  Note that $(\boldsymbol{\Lambda}, \{ (f_e)\restrict{\Lambda_{t(e)}} \}_{e \in E})$ is an $A$-representation of $Q$.  We say that $\boldsymbol{\Lambda}$ has finite index in $\bsL$ if $[L_v : \Lambda_v] < \infty$ for all $v \in V$.
\end{enumerate}
\end{dfn}

Quiver representations, especially over fields, have attracted considerable interest over the last several decades; see, for instance, the expository works~\cite{Brion/12, DerksenWeyman/17}.
Zeta functions of nilpotent quiver representations over rings were introduced recently by Lee and Voll~\cite{LeeVoll/21}:

\begin{dfn}
Let $A$ be a commutative unital ring, and let $(\bsL, \bsf)$ be an $A$-representation of a quiver $Q = (V,E)$.  Assume that for every $(\kappa_v)_v \in \N^V$ there are only finitely many $A$-subrepresentations $\boldsymbol{\Lambda}$ of $\bsL$ such that $[L_v : \Lambda_v] = \kappa_v$ for all $v \in V$.  The zeta function of $(\bsL, \bsf)$ is the Dirichlet series 
$$ \zeta_{\boldsymbol{L}} (\boldsymbol{s}) = \sum_{\boldsymbol{\Lambda} \leq \boldsymbol{L}} \prod_{v \in V} [ L_v : \Lambda_v ]^{- s_v},$$
where the sum runs over all finite-index $A$-subrepresentations of $(\boldsymbol{L}, \bsf)$, there is a complex variable $s_v$ for every $v \in V$, and $\boldsymbol{s} = (s_v)_{v \in V}$.
\end{dfn}

To avoid repeating a long string of adjectives throughout the paper, we make the following ad hoc definitions.  The representations here called admissible are, in the terminology of~\cite{LeeVoll/21}, the homogeneous nilpotent $A$-representations of class at most two that are free of finite rank.
\begin{dfn} \label{def:admissible.rep}
Let $Q = (V,E)$ be a quiver, and let $A \neq 0$ be a commutative unital ring.
\begin{enumerate}
\item
An {\emph{admissible}} $A$-representation of $Q$ is an $A$-representation $(\bsL, \bsf)$ together with a decomposition $L_v = L_{v,1} \oplus L_{v,2}$ for every $v \in V$, where $L_{v,1}$ and $L_{v,2}$ are free $A$-modules of finite rank, such that $0 \oplus L_{t(e),2} \leq \ker f_e$ and $\im f_e \leq 0 \oplus L_{h(e),2}$ for every $e \in E$.
\item
Given an admissible $A$-representation $(\boldsymbol{L}, \bsf)$, set $n(v,i) = \mathrm{rk}_{A} L_{v,i}$ for $v \in V$ and $i \in [2]$.  The \emph{rank vector} of $(\boldsymbol{L}, \bsf)$ is $\boldsymbol{n}(\boldsymbol{L}, \bsf) = (n(v,1), n(v,2))_{v \in V} \in (\N_0^2)^V$.
\end{enumerate}
\end{dfn}

\begin{rem} \label{exm:lattices}
Let $L$ be a nilpotent $A$-algebra of class at most two, and let $J \leq L$ be an ideal such that $[L,L] \leq J \leq Z(L)$ and that $J$ and $L/J$ are free $A$-modules of finite rank.  Let $(b_1, \dots, b_{n+d})$ be an $A$-basis of $L$ such that $J = \langle b_{n+1}, \dots, b_{n+d} \rangle_A$.  
Set $L_1 = \langle b_1, \dots, b_n \rangle_A$ and $L_2 = J$.  Consider the quiver consisting of a single vertex $v_0$ and $2n$ loops.  It has an admissible $A$-representation $(\boldsymbol{L}, \boldsymbol{f})$, where $L_{v_0} = L = L_1 \oplus L_2$ (the equality is one of $A$-modules), and the $2n$ maps $f_e \colon L \to L$ are the left and right multiplications by $b_1, \dots, b_n$.  It is clear that $\zeta^{\triangleleft A}_{L} (s) = \zeta_{\boldsymbol{L}}(s)$. 
\end{rem}

Further important examples of quiver representation zeta functions are Dirichlet series enumerating graded ideals of a graded $A$-algebra and submodules of an $A$-algebra that are invariant under the action of a collection of endomorphisms~\cite{Rossmann/15, Rossmann/17, Voll/17}.  Indeed, quiver representation zeta functions are equivalent to the class of Dirichlet series counting graded submodules invariant under a collection of endomorphisms~\cite[\S1.3.3]{LeeVoll/21}.  

Let $Q = (V,E)$ be a quiver and let $m \in \N$.  Set $Q^{\ast m} = (V, E \times [m])$, where $t(e,i) = t(e)$ and $h(e,i) = h(e)$ for every $i \in [m]$.  Informally, the vertex set of $Q^{\ast m}$ is the same as that of $Q$, but every edge of $Q$ is replaced by $m$ edges with the same tail and head.

\begin{dfn} \label{def:quiver.product}
Let $(\boldsymbol{L},\bsf)$ be an admissible $A$-representation of $Q$ and let $m \in \N$.  The amalgamated $m$-th power of $(\boldsymbol{L},\bsf)$ is the following admissible $A$-representation $(\boldsymbol{L}^{\ast m}, \bsf^{\ast m})$ of $Q^{\ast m}$.  For every $v \in V$, set $L^{\ast m}_v = (L_{v,1})^m \oplus L_{v,2}$ and put $\bsL^{\ast m} = \{ L_v^{\ast m} \}_{v \in V}$.  For every $e \in E$ and $i \in [m]$, define the map $f_{e,i} \colon L^{\ast m}_{t(e)} \to L^{\ast m}_{h(e)}$ to be $f_e$ on the $i$-th component of $L_{t(e),1}^m$ and zero on the remaining components of $L^{\ast m}_{t(e)}$.  Then $\bsf^{\ast m}$ is the family $\{ f_{e,i} \}_{(e,i) \in E \times [m]}$.
\end{dfn}

\begin{rem} \label{exm:lattices.amalgamated}
The previous definition is compatible with the notion of amalgamated powers of $A$-algebras via the dictionary of Remark~\ref{exm:lattices}.  Indeed, if $(\bsL, \bsf)$ is the quiver representation defined there, then $\zeta_{(\bsL^{\ast m}, \bsf^{\ast m})} = \zeta^{\triangleleft A}_{L^{\ast m}_J}$.
\end{rem}

We can now state our main results in terms of zeta functions of quiver representations.

\begin{thm} \label{thm:quiver.local}
Let $Q = (V,E)$ be a quiver, and let $\boldsymbol{n} \in (\N_0^2)^V$.  There exists $M_{\boldsymbol{n}} \in \N$ such that the following holds:

Let $A$ be a compact discrete valuation ring either of characteristic zero or of characteristic at least $M_{\boldsymbol{n}}$, and let $q$ be its residue cardinality.  Let $(\boldsymbol{L},\bsf)$ be any admissible $A$-representation of $Q$ with rank vector $\boldsymbol{n}(\boldsymbol{L},\bsf) = \boldsymbol{n}$.  There exists a rational function $W_{\boldsymbol{L}} \in \Q(X, Y_0, \boldsymbol{Y}_1, \boldsymbol{Y}_2)$, where $\boldsymbol{Y}_1 = (Y_{1,v})_{v \in V}$ and $\boldsymbol{Y}_2 = (Y_{2,v})_{v \in V}$, such that
$$ \zeta_{\boldsymbol{L}^{\ast m}}(\boldsymbol{s}) = \left( \prod_{v \in V} \zeta^{\triangleleft A}_{A^{m n(v,1)}}(s_v) \right) W_{\boldsymbol{L}}(q, q^m, (q^{-s_v}, q^{- m s_v})_{v \in V})$$
for all $m \in \N$ and all $\boldsymbol{s} \in \CC^V$ with $\mathrm{Re} \, s_v \gg_m 0$ for all $v \in V$.  The rational function $W_{\bsL}$ may be expressed over a denominator which is a product of factors having the form $(1 - X^{b} Y_0^{a_0} \prod_{v \in V} (Y_{1,v}^{a_{1,v}} Y_{2,v}^{a_{2,v}}))$ with $a_0, a_{1,v}, a_{2,v} \in \N_0$ and $b \in \Z$.
\end{thm}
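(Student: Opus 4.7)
The plan is first to parameterize finite-index $A$-subrepresentations $\boldsymbol{\Lambda} \leq \bsL^{\ast m}$ by using the admissible decomposition to peel off the factor $\prod_v \zeta^{\triangleleft A}_{A^{m n(v,1)}}(s_v)$, and then to express the remaining sum as a $p$-adic integral in which the dependence on $m$ enters only through exponents that are linear in $m$.

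For each $v \in V$, set $\mu_v = \Lambda_v \cap (0 \oplus L_{v,2})$ and let $\lambda_v \leq L_{v,1}^m$ be the image of $\Lambda_v$ under the projection onto the first summand of $L_v^{\ast m}$. Finite-indexness of $\Lambda_v$ forces $\mu_v \leq L_{v,2}$ and $\lambda_v \leq L_{v,1}^m$ to be of finite index, with $[L_v^{\ast m}:\Lambda_v] = [L_{v,2}:\mu_v]\,[L_{v,1}^m:\lambda_v]$. The pair $(\lambda_v,\mu_v)$ does not determine $\Lambda_v$: there are $[L_{v,2}:\mu_v]^{m n(v,1)}$ choices of $A$-linear lift $\phi_v\colon \lambda_v \to L_{v,2}/\mu_v$ completing the data. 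By admissibility, each $f_{e,i}$ vanishes on $L_{t(e),2}$ and lands in $L_{h(e),2}$, so the invariance condition $f_{e,i}(\Lambda_{t(e)}) \leq \Lambda_{h(e)}$ collapses to $f_e(\pi_i(\lambda_{t(e)})) \subseteq \mu_{h(e)}$, independently of $\phi$. Setting $U_v(\mu) = \bigcap_{e:\, t(e)=v} f_e^{-1}(\mu_{h(e)}) \leq L_{v,1}$, the joint condition across all $e,i$ becomes $\lambda_v \leq U_v(\mu)^m$. Summing successively over lifts, over $\lambda$, and noting $[L_{v,1}^m:U_v(\mu)^m] = [L_{v,1}:U_v(\mu)]^m$, yields
\begin{equation*}
\zeta_{\bsL^{\ast m}}(\bss) = \left( \prod_{v \in V} \zeta^{\triangleleft A}_{A^{m n(v,1)}}(s_v) \right) Z(m,\bss),
\end{equation*}
where
\begin{equation*}
Z(m,\bss) = \sum_{\mu} \prod_{v \in V} [L_{v,2}:\mu_v]^{m n(v,1) - s_v} \, [L_{v,1}:U_v(\mu)]^{-m s_v},
\end{equation*}
and $\mu = (\mu_v)_v$ ranges over tuples of finite-index $A$-sublattices $\mu_v \leq L_{v,2}$.

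Next, I would express $Z(m,\bss)$ as a $p$-adic integral. Parameterize each $\mu_v$ by its Hermite normal form $B_v \in \Mat_{n(v,2)}(\mcO)$ with $\det B_v \neq 0$: then $[L_{v,2}:\mu_v] = q^{\val(\det B_v)}$, and $[L_{v,1}:U_v(\mu)]$, being the cardinality of the image of the induced $A$-module map $L_{v,1} \to \bigoplus_{e:\, t(e)=v} L_{h(e),2}/\mu_{h(e)}$, is a power of $q$ whose exponent is a definable, piecewise-linear function (via elementary divisors) of the valuations of the entries of matrices $F_e \in \Mat(\mcO)$ representing $f_e$ and of the $B_{h(e)}$. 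After substituting $X = q$, $Y_0 = q^m$, $Y_{1,v} = q^{-s_v}$, $Y_{2,v} = q^{-m s_v}$, the integrand of $Z(m,\bss)$ is a single monomial in $Y_0, Y_{1,v}, Y_{2,v}$ whose exponents are \emph{$m$-independent} definable functions of the matrix variables. Applying a $p$-adic cell decomposition in the Denef style, valid once $\mathrm{char}\, A$ exceeds a bound $M_{\boldsymbol{n}}$ depending only on the combinatorial complexity of the cells (which itself depends only on $\boldsymbol{n}$), $Z(m,\bss)$ evaluates to a $\Q$-linear combination of geometric series of the form $\prod_j (1 - X^{b_j} Y_0^{a_{0,j}} \prod_v Y_{1,v}^{a_{1,v,j}} Y_{2,v}^{a_{2,v,j}})^{-1}$ with $a_{\cdot,\cdot} \in \N_0$ and $b_j \in \Z$, giving $W_{\bsL}$.

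The crux is this last step: controlling the shape of the denominators through the cell decomposition. This rests on the observation that all $m$-dependence in the integrand lies in the overall monomial exponents, never inside the definable integration domain itself; hence the cell decomposition and subsequent volume computations can be carried out once and for all, with each resulting geometric series producing a denominator of the stated form after the substitutions $Y_0 = q^m$, $Y_{2,v} = q^{-m s_v}$. This is the same mechanism used for the pro-isomorphic analogue~\cite[Theorem~1.3(2)]{BGS/22} discussed in Section~\ref{sec:comparison}; the anti-symmetry absent here (and present there) is irrelevant to the structure of the argument.
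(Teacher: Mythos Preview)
Your argument is correct and takes a genuinely different route from the paper's.  The paper parameterizes subrepresentations of $\bsL^{\ast m}$ by pairs $(\boldsymbol{\Lambda}, \bsM)$ with $\boldsymbol{\Lambda} \leq \bsL_1^m$ and then, crucially, invokes the combinatorial Lemma~\ref{pro:sum.lattice} to group the $\boldsymbol{\Lambda}$ according to the sum $\bsH$ of their $m$ projections; this yields a double sum over pairs $(\bsH, \bsM)$ with the familiar condition $\bsf(\bsH) \leq \bsM$ (Proposition~\ref{pro:rewrite}).  You instead observe that the invariance condition on $\boldsymbol{\Lambda}$ is simply $\lambda_v \leq U_v(\mu)^m$, which lets you sum over $\lambda_v$ directly and extract $\zeta^{\triangleleft A}_{A^{mn(v,1)}}(s_v) \cdot [L_{v,1}:U_v(\mu)]^{-ms_v}$ without any auxiliary lemma.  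Your $Z(m,\bss)$ is then a sum over $\mu$ alone.  One checks easily that the two formulae agree: summing the paper's expression over $H_v \leq U_v(M)$ collapses the $\bsH$-sum and cancels the factor $\zeta^{\triangleleft A}_{A^{n(v,1)}}(ms_v)$.

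What each approach buys: the paper's double sum leads to an integral (Lemma~\ref{lem:final.integral}) whose integrand is an explicit product of powers $|c_{ii}|^{\cdots}$, so Proposition~\ref{pro:rationality} applies immediately in characteristic zero, and the passage to positive characteristic via the expanded language $\mathfrak{L}_{\boldsymbol{n}}$ and~\cite{Nguyen/19} is transparent.  Your approach is conceptually cleaner but pays for it by needing $[L_{v,1}:U_v(\mu)]$ as a definable function of the $B_{h(e)}$; this is indeed definable (for instance via the gcd of maximal minors of the stacked matrix $\left[ \begin{smallmatrix} F_v \\ B^{(v)} \end{smallmatrix} \right]$), but the resulting integrand is not of the semi-algebraic form $|f_0| \prod |f_i|^{s_i}$ on the nose, so a direct appeal to Proposition~\ref{pro:rationality} requires a little extra work.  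Your final step---invoking ``Denef-style cell decomposition valid once $\mathrm{char}\,A \geq M_{\boldsymbol n}$''---is the right idea but should, for positive characteristic, be anchored to a specific uniform rationality result such as~\cite[Theorem~4.1.1]{Nguyen/19} or~\cite[Theorem~6.1]{HMR/18}, exactly as the paper does; Denef's original results are confined to characteristic zero.
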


We emphasize that the lower bound $M_{\boldsymbol{n}}$ in Theorem~\ref{thm:quiver.local} depends only on the rank vector $\boldsymbol{n}$, not on the representation $(\bsL, \bsf)$ or on $m$.

Given a $\Z$-representation $\boldsymbol{L}$ of $Q$, we may naturally extend scalars to any ring $A$ to obtain an $A$-representation $\boldsymbol{L} \tensor_{\Z} A$ of $Q$; see, for instance,~\cite[\S1.1]{LeeVoll/21}.  
Zeta functions of base extensions to compact discrete valuation rings of admissible quiver $\Z$-representations generically satisfy functional equations~\cite[Theorem~1.7]{LeeVoll/21}, generalizing the result for ideal zeta functions mentioned above.
By Remark~\ref{exm:lattices.amalgamated} the following claim generalizes Theorem~\ref{thm:main}, just as Theorem~\ref{thm:quiver.local} generalizes Theorem~\ref{thm:local}.

\begin{thm} \label{thm:quiver.main}
Let $(\boldsymbol{L}, \bsf)$ be an admissible $\Z$-representation of a quiver $Q$.  There exist $r, g, M \in \N$, rational functions $W_1, \dots, W_r \in \Q(X,Y_0, \boldsymbol{Y}_1, \boldsymbol{Y}_2)$, and a collection of $g$-ary formulae $\psi_1, \dots, \psi_r$ in the language of rings, such that the following holds for all compact discrete valuation rings $A$ of residue characteristic at least $M$ and for all $m \in \N$: for all $\boldsymbol{s} \in \CC^V$ such that $\mathrm{Re} \, s_v \gg_m 0$ for all $v \in V$ we have
$$ \zeta_{(\boldsymbol{L} \tensor_{\Z} A)^{\ast m}}(\boldsymbol{s}) = \left( \prod_{v \in V} \zeta^{\triangleleft A}_{A^{m n(v,1)}}(s_v) \right) \sum_{i = 1}^r m_i(k) W_i(q,q^m, (q^{-s_v}, q^{-ms_v})_{v \in V}),$$
where $k$ is the residue field of $A$, with $q = |k|$ and $m_i(k) = | \{ \xi \in k^g : k \models \psi_i(\xi) \} |$.  Moreover, there exists a universal bound $M = M_{\boldsymbol{n}}$ that works for all admissible $\Z$-representations $(\bsL, \bsf)$ of $Q$ with fixed rank vector $\boldsymbol{n}(\bsL,\bsf) = \boldsymbol{n}$.
\end{thm}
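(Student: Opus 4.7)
The plan is to lift Theorem~\ref{thm:quiver.local} to a statement uniform in $A$ by combining the $p$-adic integral representation of $\zeta_{\boldsymbol{L}^{\ast m}}(\boldsymbol{s})$ underlying its proof with the Denef--Pas cell decomposition for Henselian valued fields. The strategy mirrors the one sketched in Section~\ref{sec:comparison} for pro-isomorphic zeta functions of base extensions of nilpotent Lie algebras to number fields.

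First I would encode finite-index subrepresentations of $(\boldsymbol{L}\tensor_{\Z}A)^{\ast m}$ by a tuple of Hermite-normal-form matrices $(M_v)_{v\in V}$, one per vertex. Thanks to admissibility and the amalgamated structure, each $M_v$ splits into an outer block of dimension $mn(v,1)$, recording a sublattice of $(L_{v,1}\tensor A)^{m}$, and an inner block of dimension $n(v,2)$, recording a sublattice of $L_{v,2}\tensor A$. The only constraint coupling the two blocks is that the image of the outer block under each $f_{e,i}$ must land in the chosen inner sublattice at $h(e)$. Integrating out the outer blocks produces the explicit prefactor $\prod_{v}\zeta^{\triangleleft A}_{A^{mn(v,1)}}(s_v)$ together with a residual $p$-adic integral over the inner parameters and residue-field coordinates parametrising the image conditions at each edge.

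The second step is to isolate the $m$-dependence of the residual integral. Two things change as $m$ varies: the exponents attached to the outer variables acquire a linear factor of $m$ (since the outer blocks have been replicated), and each edge of $Q$ contributes $m$ parallel image conditions in $Q^{\ast m}$ instead of one. Because the $m$ edge conditions are isomorphic copies of a single definable condition, their joint count can be expressed rationally in $q^m$ using counts of fixed definable subsets of $k^{g}$ that do not depend on $m$. Applying a uniform Denef--Pas cell decomposition to the resulting integral yields an expression of the claimed form $\sum_{i=1}^{r}m_i(k)\,W_i(q,q^m,(q^{-s_v},q^{-ms_v})_{v\in V})$, with the prescribed denominator shape arising from geometric series summations over integer cones whose exponents depend linearly on $m$ and on the cone variables. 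The bound $M$ excludes the finite set of bad primes at which the $\Z$-structure of $\bsf$ (in particular the Smith normal forms of the matrices representing the $f_e$) degenerates; since these bounds only involve the rank vector and the matrix sizes, they specialise to a uniform $M_{\boldsymbol{n}}$.

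The main obstacle will be verifying that the formulas $\psi_i$ and rational functions $W_i$ really can be chosen independently of $m$. The decisive structural input is that $Q^{\ast m}$ is obtained from $Q$ by a fully symmetric $m$-fold edge duplication attached to a common inner space $L_{v,2}$, so the definable data of the residual integral are invariant under the symmetric group $S_m$ permuting the copies. Quotienting by this symmetry reduces the entire $m$-dependence to a product of $m$ identical contributions plus inner bookkeeping, collapsing the definable ingredients to those already present at $m=1$ while only the exponents feel $m$. Carrying this symmetry reduction out compatibly with a Denef--Pas cell decomposition — and checking that the Jacobians introduced by the change of variables do not produce spurious $m$-dependent factors — is the delicate technical point of the argument.
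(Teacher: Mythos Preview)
Your proposal has a genuine gap at the point where you claim to ``integrate out the outer blocks'' to obtain the prefactor $\prod_v \zeta^{\triangleleft A}_{A^{mn(v,1)}}(s_v)$ and a residual integral over $m$-independent data. The outer block $\Lambda_v \leq (L_{v,1}\otimes A)^m$ is not unconstrained: the edge conditions $f_{e,i}(\Lambda_{t(e)}) \leq M_{h(e)}$ couple it to the inner block, so one cannot simply pull out the full abelian zeta function. Nor do the $m$ parallel edge conditions decouple into $m$ independent copies: a general sublattice of $(L_{v,1}\otimes A)^m$ is not a product of $m$ sublattices of $L_{v,1}\otimes A$, and quotienting by the $S_m$-action does not reduce the parametrization to $m=1$ data in any evident way. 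The sentence ``collapsing the definable ingredients to those already present at $m=1$ while only the exponents feel $m$'' is exactly the conclusion you need, and the symmetry sketch offered does not establish it.

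The missing ingredient, supplied in the paper by Lemma~\ref{pro:sum.lattice} and Proposition~\ref{pro:rewrite}, is the observation that the edge constraints depend on $\Lambda_v$ only through the single sublattice $H_v = \sum_{i=1}^m \varphi_{v,i}(\pi_{v,i}(\Lambda_v)) \leq L_{v,1}\otimes A$, together with the closed formula
\[
\sum_{\Lambda \in S_H} [\Omega:\Lambda]^{-s} \;=\; \frac{\zeta^{\triangleleft A}_{A^{mn}}(s)}{\zeta^{\triangleleft A}_{A^{n}}(ms)}\,[A^n:H]^{-ms}
\]
for the weighted count of sublattices $\Lambda \leq (A^n)^m$ with prescribed sum of projections $H$. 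This replaces the $m$-dependent domain by a sum over $\bsH \leq \bsL_1$ and $\bsM \leq \bsL_2$, sets that are genuinely independent of $m$, with $m$ surviving only in the exponents. The resulting $p$-adic integral (Lemma~\ref{lem:final.integral}) then has a fixed $\mathfrak{L}_{\mathrm{DP}}$-definable domain, and one applies the uniform rationality result of Proposition~\ref{pro:bdop} with $m$ treated as one of the complex variables. Your account of the bound $M$ is also off: it comes from the model-theoretic transfer (cell decomposition valid for large residue characteristic), not from Smith normal forms of the $f_e$; the claim that $M$ depends only on $\boldsymbol{n}$ requires the stronger machinery of~\cite{Nguyen/19} or~\cite{HMR/18} applied in the expanded language $\mathfrak{L}_{\boldsymbol{n}}$ (see Remark~\ref{rmk:nguyen.to.main}).
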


\subsection{Structure of the paper}
In Section~\ref{sec:preliminaries} below we obtain three auxiliary results on which the proofs of our main theorems rely.  Proposition~\ref{pro:rationality} states that certain $p$-adic integrals, over a fixed local field, are expressed by rational functions.
Proposition~\ref{pro:bdop} is a uniformity result for $p$-adic integrals of a particular type, whereas the crucial combinatorial Lemma~\ref{pro:sum.lattice} counts $A$-submodules $\Lambda$ of a direct product $L^m$ such that the sum of the projections of $\Lambda$ to the components of $L^m$ is a fixed submodule of $L$.  In Section~\ref{sec:proof} we carry out the program outlined in Section~\ref{sec:comparison} above by expressing the zeta functions of amalgamated powers of admissible quiver representations as $p$-adic integrals and applying the results of Section~\ref{sec:preliminaries} to prove Theorems~\ref{thm:quiver.local} and~\ref{thm:quiver.main}.

\section{Preliminaries} \label{sec:preliminaries}
\subsection{A rationality result}
Let $\mathrm{Loc}$ be the collection of pairs $(F,\pi_F)$, where $F$ is a non-Archimedean local field with normalized additive valuation $v_F$ (it suffices to consider one representative of each isomorphism class of such fields) and $\pi_F \in F$ is a uniformizer; we will generally omit $\pi_F$ in the notation.  Write $\mathcal{O}_F$ for the valuation ring of $F \in \mathrm{Loc}$, and let $k_F = \mathcal{O}_F / (\pi_F)$ be the residue field and $q_F = |k_F|$ its cardinality.  The multiplicative valuation on $F$ is given by $| x |_F = q_F^{-v_F(x)}$ for $x \in F$.  For every $n \in \N$, we denote by $\mu_F$ the Haar measure on $F^n$ normalized so that $\mu_F(\mathcal{O}_F^n) = 1$.  Given $M \in \N$, let $\mathrm{Loc}_{M} \subset \mathrm{Loc}$ be the collection of $F \in \mathrm{Loc}$ with $\mathrm{char} \, k_F \geq M$.  Denote by $\mathrm{Loc}^0 \subset \mathrm{Loc}$ the collection of local fields of characteristic zero.  

Let $F \in \mathrm{Loc}^0$.  A set $Y \subset F^\ell$ is \emph{semi-algebraic}~\cite[Definition~1.2]{Denef/86} if it may be constructed from sets of the form $\{ x \in F^\ell : \exists z \in F ,\  f(x) = z^n \}$, where $n \geq 2$ and $f \in F[x_1, \dots, x_\ell]$, by taking finitely many unions, intersections, and complements.  By Theorem~1.3 and Lemma~2.1 of~\cite{Denef/86}, a set $Y \subset F^\ell$ is semi-algebraic if and only if $Y$ is definable in Macintyre's language~\cite{Macintyre/76}.  A function is called semi-algebraic if its graph is a semi-algebraic set~\cite[Remark~1.5]{Denef/86}.

The following claim generalizes Denef's classical rationality theorem~\cite[Theorem~7.4]{Denef/84}. 
We say that $S \subset \R^n$ is a {\emph{generalized right half-space}} if $(s_1, \dots, s_n) \in S$ implies that $(s_1^\prime, \dots, s_n^\prime) \in S$ whenever $s_i^\prime \geq s_i$ for all $i \in [n]$.  If $\bss = (s_1, \dots, s_n) \in \CC^n$, we write $\mathrm{Re} \, \bss = (\mathrm{Re} \, s_1, \dots, \mathrm{Re} \, s_n) \in \R^n$.

\begin{pro} \label{pro:rationality}
Let $F \in \mathrm{Loc}^0$, let $Y \subset F^\ell$ be a semi-algebraic set, and consider semi-algebraic functions $f_0, f_1, \dots, f_n\colon Y \to F$. If there exists a generalized right half-space $S \subset \R^n$ such that the function
\[ I(s_1, \dots, s_n) = \int_Y |f_0(y) |_F |f_1 (y) |_F^{s_1} \cdots |f_n (y)|_F^{s_n} d \mu_F\]
is defined for all $\boldsymbol{s} = (s_1, \dots, s_n) \in \CC^n$ with $\mathrm{Re} \, \boldsymbol{s} \in S$, then $I(s_1, \dots, s_n)$ is a rational function in the variables $q_F^{- s_1}, \dots, q_F^{- s_n}$.  The denominator may be expressed as a product of factors of the form $(1 - q_F^{-a_0 - a_1 s_1 - \cdots - a_n s_n})$ with $(a_1, \dots, a_n) \in \N_0^{n}$ and $a_0 \in \Z$ such that $(a_0, \dots, a_n) \neq (0, \dots, 0)$.
\end{pro}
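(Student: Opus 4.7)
The plan is to deduce this multi-variable rationality from the cell-decomposition and generating-function machinery underlying Denef's classical one-variable rationality theorem~\cite[Theorem~7.4]{Denef/84}. The idea is to reduce the integral to a convergent Presburger sum of exponential monomials, which can then be evaluated explicitly via triangulation of the index set.

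First, by Macintyre's quantifier elimination~\cite{Macintyre/76} the set $Y$ and each $f_i$ are definable in the language of valued fields. Applying Denef's cell decomposition~\cite{Denef/86}, iterated if necessary so as to treat $f_0, f_1, \dots, f_n$ simultaneously, I would partition $Y$ into finitely many semi-algebraic cells $C_j$. On each cell $C_j$ there is a distinguished tuple of coordinates with valuations $\boldsymbol{e} = (e_1, \dots, e_{r_j}) \in \Z^{r_j}$ and, for each $i \in \{0,1,\dots,n\}$, a $\Z$-affine function $\alpha_{j,i}\colon \Z^{r_j} \to \Z$ such that $v_F(f_i(y)) = \alpha_{j,i}(\boldsymbol{e})$ for all $y \in C_j$, with $\boldsymbol{e}$ ranging over a Presburger-definable set $P_j \subset \Z^{r_j}$; the remaining (angular and free) coordinates contribute a constant factor $c_j$ per level set.

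Integrating over each cell, one obtains
\[
I(s_1, \dots, s_n) = \sum_j c_j \sum_{\boldsymbol{e} \in P_j} q_F^{-\gamma_j(\boldsymbol{e}) - \sum_{i=1}^n s_i \delta_{j,i}(\boldsymbol{e})},
\]
where $\gamma_j, \delta_{j,i}\colon \Z^{r_j} \to \Z$ are $\Z$-affine (with $\gamma_j$ absorbing both $\alpha_{j,0}$ and the measure-theoretic factor $\sum_\ell e_\ell$ coming from the Haar measure on the distinguished coordinates). The hypothesis that $I$ is defined on the generalized right half-space $S$ guarantees absolute convergence of each inner sum for $\mathrm{Re}\,\boldsymbol{s} \in S$.

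To conclude, I would triangulate each $P_j$ as a finite disjoint union of translated simplicial cones $\boldsymbol{v}_\sigma + \bigoplus_{t=1}^{r_\sigma} \N_0 \, \boldsymbol{w}_{\sigma,t}$ with $\boldsymbol{w}_{\sigma,t} \in \Z^{r_j}$; the sum on each cone is then a product of geometric series
\[
q_F^{-\gamma_j(\boldsymbol{v}_\sigma) - \sum_i s_i \delta_{j,i}(\boldsymbol{v}_\sigma)} \prod_{t=1}^{r_\sigma} \bigl(1 - q_F^{-\gamma_j'(\boldsymbol{w}_{\sigma,t}) - \sum_i s_i \delta_{j,i}'(\boldsymbol{w}_{\sigma,t})}\bigr)^{-1},
\]
where primes denote linear parts. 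The main obstacle is ensuring that each denominator factor takes the required shape $(1 - q_F^{-a_0 - a_1 s_1 - \cdots - a_n s_n})$ with $a_i \in \N_0$ for $i \geq 1$ and $(a_0, \dots, a_n) \neq 0$: one chooses the cones so that every generator $\boldsymbol{w}_{\sigma,t}$ lies in a direction along which $\delta_{j,i}'$ is nonnegative for each $i \geq 1$, which is forced by the absolute convergence of the full sum on the generalized right half-space $S$ (a generator whose $(\delta_{j,1}',\dots,\delta_{j,n}')$-image had a negative coordinate would produce a term diverging for large $\mathrm{Re}\,\boldsymbol{s}$). Summing the finitely many rational contributions yields the claimed rational expression.
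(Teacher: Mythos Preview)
Your argument is essentially correct and follows the classical route to Denef-type rationality: cell decomposition followed by reduction to Presburger sums and then summation over cones. The convergence argument you give for the nonnegativity of the $a_i$ ($i \geq 1$) is exactly the right one and matches what the paper does in its own setting.

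The paper takes a somewhat different, more streamlined route. Rather than iterating Denef's cell decomposition and then triangulating the resulting Presburger sets, it invokes the \emph{rectilinearization} theorem of Cluckers and Leenknegt~\cite[Theorem~7]{CluckersLeenknegt/08}. This gives, in one stroke, a finite partition of $Y$ into semi-algebraic pieces $B$, each of which is the image of a box $(\mathcal{O}_F^{(t)})^\ell$ under a semi-algebraic bijection $\varphi$ with controlled Jacobian, and such that each $v_F(f_j \circ \varphi)$ is $\Z$-linear in the valuations of the box coordinates. The integral over each box then factors by Fubini into a product of one-variable geometric series, and the denominator shape and sign constraints fall out immediately from convergence of each single-variable factor. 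In effect, rectilinearization performs your cell decomposition, change of variables, and cone triangulation simultaneously at the semi-algebraic level.

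What each approach buys: the paper's rectilinearization route is shorter and avoids having to discuss Presburger sets, congruence conditions in their cone decomposition, or the constancy of the fibre measure $c_j$ (the one point in your sketch that would need the most care to justify cleanly). Your approach, by contrast, is closer to Denef's original argument and is perhaps more transparent about where the Presburger structure enters; it also generalizes more readily to settings where one already has a reduction to Presburger data but not a rectilinearization theorem for the ambient structure.
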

\begin{proof}
We imitate the proof of~\cite[Corollary~15]{CluckersLeenknegt/08}, which is our claim in the case $n = 1$.  For every $t \in \N$ set
\[ \mathcal{O}_F^{(t)} = \{ \pi_F^v (1 + \pi_F^t \alpha) : v \in \N_0, \alpha \in \mathcal{O}_F \} \subset \mathcal{O}_F.\]
By~\cite[Theorem~7]{CluckersLeenknegt/08} there is a finite partition of $Y$ into semi-algebraic parts such that each part $B$ either has measure zero (this corresponds to the map $g$ being a composition of bijections that include maps of the type $f_0$, in the notation of~\cite[Theorem~8]{CluckersLeenknegt/08}), or there exists a bijective semi-algebraic function $\varphi\colon (\mathcal{O}_F^{(t)})^\ell \to B$ such that, for every $0 \leq j \leq n$, we have
$v_F (f_j (\varphi (x))) = v_F \big( \beta_j \prod_{i = 1}^\ell x_i^{\mu_{ij}} \big)$ for some $\beta_j \in F$ and $(\mu_{1j}, \dots, \mu_{\ell j}) \in \Z^\ell$.  Moreover, the valuation of the Jacobian determinant of $\varphi$ has the same form.  Thus, possibly adjusting the integers $\mu_{i0}$ to absorb the Jacobian, we decompose $I(s_1, \dots, s_n)$ into a sum of finitely many integrals of the form
\[ 
\int_{(\mathcal{O}_F^{(t)} )^\ell} \left| \beta_0 \prod_{i = 1}^{\ell} x_i^{\mu_{i0}} \right|_{F}\ \prod_{j = 1}^n \left| \beta_j \prod_{i = 1}^\ell x_i^{\mu_{ij}} \right|_{F}^{s_j} d \mu_F.
\]
Since the integrand decomposes into a product of factors depending on a single variable $x_i$, we may assume without loss of generality that $\ell = 1$.  Then we obtain the integral
\begin{multline*}
\int_{\mathcal{O}_F^{(t)}} | \beta_0 x^{\mu_0} |_F \prod_{j = 1}^n |\beta_j x^{\mu_j} |_F^{s_j} d\mu_F (x) = \\
\sum_{v = 0}^\infty \mu_F(\pi_F^v(1 + \pi_F^t \mathcal{O}_F)) |\beta_0|_F \prod_{j = 1}^n | \beta_j|_F^{s_j} q_F^{-v(\mu_0 + \mu_1 s_1 + \cdots \mu_n s_n)} = \\
\frac{q_F^{-t} | \beta_0 |_F \prod_{j = 1}^n |\beta_j |^{s_j}_F}{1 - q_F^{-1 -\mu_0 - \sum_{j = 1}^n \mu_j s_j}} = \frac{q_F^{-t - v_F(\beta_0) - \sum_{j=1}^n v_F(\beta_j) s_j}}{1 - q_F^{-1 -\mu_0 - \sum_{j = 1}^n \mu_j s_j}}. 
\end{multline*}
Here the second equality follows by our assumptions regarding the convergence of $I(s_1, \dots, s_n)$.  Note that all terms of the series above are positive when $(s_1, \dots, s_n) \in \R^n$, so we must have $\mu_j \geq 0$ for all $j \in [n]$.  Moreover, if $\mu_j = 0$ for all $j \in [n]$, then $\mu_0 + 1 > 0$.  This completes the proof.
\end{proof}

\subsection{The Denef--Pas language and expansions} \label{sec:language.definition}
The Denef--Pas language $\mathfrak{L}_{\mathrm{DP}}$~\cite[Definition~2.3]{Pas/89} has three sorts: the valued field sort $\mathrm{VF}$ and the
residue field sort $\mathrm{RF}$ are endowed with the language of
rings, and the valued group sort $\mathrm{VG}$, which we will simply call
$\mathbb{Z}$, is endowed with the {Presburger language} of ordered abelian groups.
Moreover, $\mathfrak{L}_{\mathrm{DP}}$ has two function
symbols: $v\colon\mathrm{VF}\setminus\{0\}\rightarrow\mathrm{VG}$ and
$\mathrm{ac}\colon\mathrm{VF}\rightarrow\mathrm{RF}$, interpreted as a
valuation map and an angular component map, respectively.
Any formula $\phi$ in $\mathfrak{L}_{\mathrm{DP}}$ with $c_{1}$
free $\mathrm{VF}$-variables, $c_{2}$ free $\mathrm{RF}$-variables,
and $c_{3}$ free $\mathbb{Z}$-variables yields a subset $\phi(F)\subseteq F^{c_{1}}\times k_{F}^{c_{2}}\times\mathbb{Z}^{c_{3}}$
for any $F\in\mathrm{Loc}$.  A collection $X=(X_{F})_{F\in\mathrm{Loc}_{M}}$
with $X_{F}=\phi(F)$ and $M \in \N$ is called an \textsl{$\mathfrak{L}_{\mathrm{DP}}$-definable
set}.  An \textsl{$\mathfrak{L}_{\mathrm{DP}}$-}\textit{definable
function} is a collection of functions $f=(f_{F}\colon X_{F}\to Y_{F})_{F\in\mathrm{Loc}_{M}}$
such that the associated collection of graphs is an \textsl{$\mathfrak{L}_{\mathrm{DP}}$-}definable
set.  

In Sections~\ref{sec:restatement} and~\ref{sec:proof.quiver} below we will consider expansions of $\mathfrak{L}_{\mathrm{DP}}$ by a finite number of constant symbols.  Let $Q = (V,E)$ be a quiver and fix $\boldsymbol{n} \in (\N_0^2)^V$.
Let $\mathfrak{L}_{\boldsymbol{n}}$ be the expansion of the Denef--Pas language $\mathfrak{L}_{\mathrm{DP}}$ by $\sum_{e \in E} n(t(e),1) n(h(e),2)$ constant symbols $a_{ij}^e$, for $e \in E$ and $(i,j) \in [n(t(e),1)] \times [n(h(e),2)]$, of the valued field.  These will be interpreted as structure constants of a representation of the quiver $Q$.  Thus, a structure for the language $\mathfrak{L}_{\boldsymbol{n}}$ will be a triple $(F, \pi_F,\boldsymbol{a})$, where $(F, \pi_F) \in \mathrm{Loc}$, while the collection $\boldsymbol{a}$ of constants $a_{ij}^e$ gives an admissible $F$-representation $(\bsL, \bsf)$ with $F$-bases $b^v$ of $L_{v,1}$ and $\beta^v$ of $L_{v,2}$ for every $v \in V$ and arrows determined by $f_e(b_i^{t(e)}) = \sum_{j = 1}^{n(h(e),2)} a_{ij}^e \beta_j^{h(e)}$ for edges $e \in E$ and $i \in [n(t(e),1)]$.  Let $\mathrm{LocRep}$ be the set of such triples.  

\subsection{A uniformity result}
The rationality result of Proposition~\ref{pro:rationality} may be upgraded to a claim providing uniformity as $F$ varies over all local fields of sufficiently large residue characteristic.  

\begin{pro} \label{pro:bdop}
Let $Y = (Y_F)_F$ be an $\mathfrak{L}_{\mathrm{DP}}$-definable subset of $\mathrm{VF}^{c}$,
let $n\in\mathbb{N}$, and let $\Phi_0, \Phi_1, \dots, \Phi_{n}\colon Y \to\mathbb{Z}$
be $\mathfrak{L}_{\mathrm{DP}}$-definable functions.
Suppose that there exists a generalized right half-space $S \subset \R^n$ and $M \in \N$ such that for all $F \in \mathrm{Loc}_M$ the
integral $\int_{Y_{F}}q_{F}^{\Phi_0(y) + \Phi_{1}(y)s_1 + \cdots + \Phi_n(y) s_n }d\mu_F$
converges for all $\boldsymbol{s} = (s_1, \dots, s_n) \in\CC^n$ satisfying $\mathrm{Re} \, \boldsymbol{s} \in S$. 

Then there exist $\unifmax, g, M^\prime \in \N$, a collection of $g$-ary formulae $\psi_1, \dots, \psi_{\unifmax}$ in the language of rings, and rational functions
$W_{1},...,W_{\unifmax} \in\mathbb{Q}(X,Y_1, \dots, Y_n)$ such that the following holds for all $F\in\mathrm{Loc}_{M^\prime}$
and all $\boldsymbol{s} \in\CC^n$ satisfying $\mathrm{Re} \, \boldsymbol{s} \in S$:
\[
\int_{Y_{F}}q_{F}^{\Phi_0(y) + \Phi_{1}(y)s_1 + \cdots + \Phi_n(y) s_n}d\mu_F =\sum_{i=1}^{\unifmax}m_{i}(k_{F})\cdot W_{i}(q_{F},q_{F}^{-s_1}, \dots, q_F^{-s_n}),
\]
where $m_{i}(k_{F})$ is the cardinality of the set $\{\xi\in k_{F}^{g}:k_{F}\models \psi_{i}(\xi)\}$
for all $i\in[\unifmax]$. Moreover, for each $i \in [\unifmax]$ the denominator of $W_i$ may be expressed as a product of factors of the form $(1 - X^{a_0} Y_1^{a_1} \cdots Y_n^{a_n})$ for $(a_1, \dots, a_n) \in \N_0^n$ and $a_0 \in \Z$ such that $(a_0, \dots, a_n) \neq (0, \dots, 0)$.

\end{pro}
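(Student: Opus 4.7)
The plan is to prove the proposition by combining Pas' uniform cell decomposition theorem for the Denef--Pas language with Presburger summation, in a manner parallel to, but more delicate than, the proof of Proposition~\ref{pro:rationality}. The key point is that Pas' decomposition applies uniformly to all $F \in \mathrm{Loc}_{M'}$ provided $M'$ is sufficiently large, whereas the Cluckers--Leenknegt decomposition used for Proposition~\ref{pro:rationality} operates over a single fixed field.

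First, I would apply Pas' cell decomposition to refine the $\mathfrak{L}_{\mathrm{DP}}$-definable set $Y$. Uniformly in $F \in \mathrm{Loc}_{M'}$, this partitions $Y_F$ into finitely many $\mathfrak{L}_{\mathrm{DP}}$-definable cells, and on each cell the functions $\Phi_0, \Phi_1, \dots, \Phi_n$ become Presburger-affine in the $\mathrm{VG}$-coordinates of the cell. Moreover, each cell admits a measure-preserving parametrization by a product $C \times D \times B$, where $C \subseteq \Z^k$ is Presburger-definable, $D \subseteq k_F^g$ is cut out by a ring-language formula, and $B$ is a ball in $\mathcal{O}_F^{c'}$ whose measure equals $q_F^{-h(\boldsymbol{v})}$ for a Presburger-affine $h$ in the coordinate $\boldsymbol{v} \in C$. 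The combinatorial skeleton of this decomposition is independent of $F$ once $M'$ is large enough.

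Next, the contribution of each cell to the integral reduces to an expression of the shape
\[
m_i(k_F) \cdot \sum_{\boldsymbol{v} \in C} q_F^{L_0(\boldsymbol{v}) + \sum_{j=1}^{n} L_j(\boldsymbol{v}) s_j},
\]
where the $L_j$ are $\Z$-affine functionals with integer coefficients independent of $F$, and $m_i(k_F) = |\{ \xi \in k_F^g : k_F \models \psi_i(\xi) \}|$ for a ring-language formula $\psi_i$. Applying Presburger rectilinearization (in the style of Cluckers) to the set $C$, each such sum splits into finitely many geometric series over subsets of $\N_0^{k'}$, each evaluating to a rational function in $q_F^{-1}$ and $q_F^{-s_j}$ whose denominator is a product of factors of the required form $(1 - X^{a_0} Y_1^{a_1} \cdots Y_n^{a_n})$. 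The convergence hypothesis on the generalized right half-space $S$ forces each exponent vector to satisfy $(a_1, \dots, a_n) \in \N_0^n$ and $(a_0, \dots, a_n) \neq (0, \dots, 0)$, for otherwise the corresponding geometric series would diverge for some $\mathrm{Re}\,\boldsymbol{s} \in S$.

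The main obstacle will be maintaining strict uniformity in $F$: the cell decomposition, the Presburger-definable sets $C$, the ring-language formulae $\psi_i$, and the integer coefficients of the $L_j$ must all be selected once and for all, independently of $F$, so that the threshold $M'$ depends only on the defining data of $Y$ and the $\Phi_j$. Pas' theorem guarantees this provided the residue characteristic exceeds an explicit bound coming from the defining formulae, but careful bookkeeping is needed to ensure that the residue-field and Presburger contributions factor cleanly on every cell, that the exponent vectors in the denominators fall into the prescribed form under the convergence hypothesis, and that the finite list of outputs $(W_i, \psi_i)$ is genuinely independent of $F$.
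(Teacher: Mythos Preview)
Your approach is correct and is precisely the method underlying the references the paper invokes. The paper's own proof does not spell out the argument at all: it simply cites \cite[Theorem~B]{BDOP/11} for $n=1$ and \cite[Theorem~4.1]{BGS/22} for $n=2$, remarks that the latter's proof (via rectilinearization as in \cite{Cluckers/03} and \cite[Theorem~2.1.9]{CGH/14}) generalizes in an obvious way to arbitrary $n$, and stops there.
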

\begin{proof}
The case $n = 1$ is~\cite[Theorem~B]{BDOP/11}. 
The case $n = 2$ is~\cite[Theorem~4.1]{BGS/22}; its proof, like that of Proposition~\ref{pro:rationality}, uses rectilinearlization (\cite[Theorem~2]{Cluckers/03} and~\cite[Theorem~2.1.9]{CGH/14}).  Moreover, it generalizes in an obvious way to arbitrary $n \in \N$.
\end{proof}

\subsection{A combinatorial lemma}
The following observation is key to the method of this paper.  Let $A$ be a compact discrete valuation ring.
\begin{lem} \label{pro:sum.lattice}
Let $m,n \in \N$, and let $\Omega$ be a free $A$-module of rank $mn$.  Suppose that we are given a decomposition $\Omega = \Omega_1 \oplus \cdots \oplus \Omega_m$ and an $A$-module isomorphism $\varphi_i \colon \Omega_i \to A^n$ for every $i \in [m]$.  Let $\pi_i \colon \Omega \to \Omega_i$ be the projection onto the $i$-th component.  For every finite-index $A$-submodule $H \leq A^n$, let $S_H$ be the set of finite-index $A$-submodules $\Lambda \leq \Omega$ such that $\sum_{i = 1}^m \varphi_i (\pi_i(\Lambda)) = H$.  Then
\[ \sum_{\Lambda \in S_H} [\Omega : \Lambda]^{-s} = \frac{\zeta^{\triangleleft A}_{A^{mn}}(s)}{\zeta^{\triangleleft A}_{A^n}(ms)} [A^n : H]^{-ms}.\]
\end{lem}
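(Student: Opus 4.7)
The plan is to first reduce to the special case $H = A^n$ by a change-of-variables argument using the fact that $A$ is a PID, and then compute the remaining special case by a global partition of all finite-index submodules of $\Omega$. Write $f(H) = \sum_{\Lambda \in S_H} [\Omega:\Lambda]^{-s}$ for the quantity of interest.

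For the reduction, I would first observe that any $\Lambda \in S_H$ satisfies $\varphi_i(\pi_i(\Lambda)) \subseteq H$ for every $i$, so $\Lambda \subseteq \tilde H := \bigoplus_{i=1}^m \varphi_i^{-1}(H)$, which is a finite-index submodule of $\Omega$ with $[\Omega:\tilde H] = [A^n:H]^m$. Since $A$ is a discrete valuation ring and hence a PID, the finite-index submodule $H \leq A^n$ is free of rank $n$; fix an $A$-module isomorphism $\psi \colon H \xrightarrow{\sim} A^n$. Extending $\psi$ componentwise gives an isomorphism $\tilde\psi = \bigoplus_i (\varphi_i^{-1} \circ \psi \circ \varphi_i) \colon \tilde H \xrightarrow{\sim} \Omega$ satisfying $\varphi_i(\pi_i(\tilde\psi(x))) = \psi(\varphi_i(\pi_i(x)))$ for all $x \in \tilde H$ and all $i$. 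The assignment $\Lambda \mapsto \tilde\psi(\Lambda)$ is therefore a bijection from $S_H$ onto $S_{A^n}$, and multiplicativity of indices gives $[\Omega:\Lambda] = [\Omega:\tilde H] \cdot [\tilde H : \Lambda] = [A^n : H]^m \cdot [\Omega : \tilde\psi(\Lambda)]$. Summing over $\Lambda \in S_H$ yields $f(H) = [A^n:H]^{-ms} f(A^n)$.

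For the case $H = A^n$, I would partition all finite-index submodules $\Lambda \leq \Omega$ according to the value $H_\Lambda := \sum_i \varphi_i(\pi_i(\Lambda))$, which is itself a finite-index submodule of $A^n$ because each $\pi_i(\Lambda)$ has finite index in $\Omega_i$. This produces
\[ \zeta^{\triangleleft A}_{A^{mn}}(s) = \sum_{H \leq A^n} f(H) = f(A^n) \sum_{H \leq A^n} [A^n:H]^{-ms} = f(A^n) \cdot \zeta^{\triangleleft A}_{A^n}(ms), \]
where the second equality applies the reduction of the previous step simultaneously for every $H$. Solving for $f(A^n)$ and substituting back into $f(H) = [A^n:H]^{-ms} f(A^n)$ delivers the stated identity.

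The only real point of care is the compatibility of $\tilde\psi$ with the projection-sum condition, i.e.\ that $\sum_i \varphi_i(\pi_i(\Lambda)) = H$ if and only if $\sum_i \varphi_i(\pi_i(\tilde\psi(\Lambda))) = A^n$; this follows at once from the intertwining identity above together with the surjectivity of $\psi$. All other steps---the index computation, the partition of finite-index submodules of $\Omega$ by their projection-sum, and the absolute convergence of the Dirichlet series for $\mathrm{Re}\, s \gg 0$---are routine.
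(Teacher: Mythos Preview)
Your proof is correct and follows essentially the same strategy as the paper: reduce to $H = A^n$ via a bijection $S_H \leftrightarrow S_{A^n}$ that scales indices by $[A^n:H]^m$, then determine $f(A^n)$ by partitioning all finite-index submodules of $\Omega$ according to their projection-sum. The only cosmetic difference is that the paper builds the bijection in the opposite direction, via an explicit diagonal endomorphism $\psi\colon\Omega\to\Omega$ obtained from the elementary divisors of $H$ (so that $\psi(\Omega)=\tilde H$ in your notation), whereas you use an abstract isomorphism $\psi\colon H\xrightarrow{\sim}A^n$ extended componentwise; these are inverse constructions and the verification of the intertwining identity and index computation is the same either way.
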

\begin{proof}
Fix a uniformizer $\pi \in A$.
There exists an $A$-basis $(\beta_1,  \dots, \beta_n)$ of $A^n$ and integers $\nu_1,  \dots, \nu_n \in \N_0$ such that $H$ is the $A$-linear span of $\{ \pi^{\nu_1} \beta_1, \dots, \pi^{\nu_n} \beta_n \}$.  Set $\beta_j^{(i)} = \varphi_i^{-1}(\beta_j)$ for every $i \in [m]$ and $j \in [n]$.  Then $\{ \beta^{(i)}_j : (i,j) \in [m] \times [n] \}$ is an $A$-basis of $\Omega$.  Consider the $A$-module endomorphism $\psi\colon \Omega \to \Omega$ determined by $\psi(\beta^{(i)}_j) = \pi^{\nu_j} \beta^{(i)}_j$.  Let $\widetilde{\psi}$ be the endomorphism of $A^n$ given by $\widetilde{\psi}(\beta_j) = \pi^{\nu_j} \beta_j$.  For any finite-index $A$-submodule $\Lambda \leq \Omega$, it is clear that
\[ \sum_{i = 1}^m \varphi_i(\pi_i(\psi(\Lambda))) = \widetilde{\psi} \left( \sum_{i = 1}^m \varphi_i (\pi_i (\Lambda)) \right).\]
In particular, since $\widetilde{\psi}(A^n) = H$, the map $\psi$ induces a map $S_{A^n} \to S_H$.  This map is injective since $\psi$ is.  Moreover, it is easy to see that if $\Lambda \leq H$, then there exists an $A$-submodule $\Lambda^\prime \leq \Omega$ satisfying $\Lambda = \psi(\Lambda^\prime)$.  Thus $\psi$ induces a bijection between $S_{A^n}$ and $S_H$.
Moreover,
\[ [\Omega: \psi(\Lambda)] = q^{\sum_{i = 1}^m \sum_{j = 1}^n \nu_j} [\Omega: \Lambda] = q^{m \sum_{j = 1}^n \nu_j} [\Omega: \Lambda] = [A^n : H]^m [\Omega: \Lambda]\]
for any $A$-submodule $\Lambda \leq \Omega$.  Hence
\begin{equation} \label{equ:an.h}
 \sum_{\Lambda \in S_H} [\Omega: \Lambda]^{-s} = [A^n : H]^{-ms} \sum_{\Lambda \in S_{A^n}} [\Omega: \Lambda]^{-s}.
\end{equation}
We conclude that
\begin{multline} \label{equ:total}
\zeta^{\triangleleft A}_{A^{mn}}(s) = \sum_{\Lambda \leq \Omega} [\Omega: \Lambda]^{-s} = \sum_{H \leq A^n} \sum_{\Lambda \in S_H} [\Omega: \Lambda]^{-s} = \\
\left( \sum_{H \leq A^n} [A^n : H]^{-ms} \right) \left( \sum_{\Lambda \in S_{A^n}} [\Omega: \Lambda]^{-s} \right) = \zeta^{\triangleleft A}_{A^n}(ms) \sum_{\Lambda \in S_{A^n}} [\Omega: \Lambda]^{-s}.
\end{multline}
where the sums run over finite-index submodules. The claim is immediate from~\eqref{equ:an.h} and~\eqref{equ:total}.
\end{proof}

\begin{rem}
In the case $m = 1$, Lemma~\ref{pro:sum.lattice} is a special case of~\cite[Proposition~4.2]{CSV/19}, which is proved by a similar argument.  A mutual generalization of Lemma~\ref{pro:sum.lattice} and~\cite[Proposition~4.2]{CSV/19} is readily obtained, but we do not state it as we shall not need it in this paper.
\end{rem}

\section{Quiver representation zeta functions and \texorpdfstring{$p$}{p}-adic integration} \label{sec:proof}
Fix a quiver $Q = (V,E)$. 
Throughout this section, $A$ is a compact discrete valuation ring with residue field of cardinality $q$.

\subsection{Rewriting the zeta function}
Let $(\boldsymbol{L}, \boldsymbol{f})$ be an admissible $A$-representation of the quiver $Q$.  For $i \in [2]$, define $\bsL_i$ to be the family $(L_{v,i})_{v \in V}$.  Given two families $\bsH = (H_v)$ and $\bsH^\prime = (H^\prime_v)$, we write $\bsH \leq \bsH^\prime$ if $H_v \leq H^\prime_v$ is a finite-index $A$-submodule for all $v$.  For $\bsH \leq \bsL_1$ and $v \in V$, set 
$$\bsf(\bsH)_v = \sum_{\substack{e \in E \\ h(e) = v}} f_e (H_{t(e)}) \leq L_{v,2}.$$
This defines a family $\bsf(\bsH) \leq \bsL_2$.
Recall the rank vector $\boldsymbol{n}(\bsL, \bsf)$ from Definition~\ref{def:admissible.rep}.
The following claim, for which admissibility is essential, is the analogue for quiver representations of~\cite[Lemma~6.1]{GSS/88}.

\begin{lem} \label{lem:lemma6.1}
Let $(\bsL,\bsf)$ be an admissible $A$-representation of $Q$ with rank vector $\boldsymbol{n}$.  Then
$$ \zeta_{\bsL}(\bss) = \sum_{\bsH \leq \bsL_1} \sum_{\substack{\bsM \leq \bsL_2 \\ \bsf(\bsH) \leq \bsM}} \left( \prod_{v \in V} [L_{v,1} : H_v]^{-s_v} [L_{v,2} : M_v]^{n(v,1) - s_v} \right).$$
\end{lem}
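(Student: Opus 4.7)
The plan is to parametrize each finite-index $A$-subrepresentation $\boldsymbol{\Lambda}$ of $(\bsL, \bsf)$ by three pieces of data at every vertex: the submodule $H_v = \pi_{v,1}(\Lambda_v) \leq L_{v,1}$ (where $\pi_{v,1} \colon L_v \to L_{v,1}$ is projection via the admissible decomposition), the submodule $M_v = \Lambda_v \cap L_{v,2}$, and a homomorphism $\phi_v \in \Hom_A(H_v, L_{v,2}/M_v)$ describing how $\Lambda_v$ sits as an extension of $H_v$ by $M_v$ inside $L_{v,1}\oplus L_{v,2}$. Since $L_{v,1}$ is free over the discrete valuation ring $A$, so is the finite-index submodule $H_v$, of rank $n(v,1)$; this projectivity makes the parametrization clean. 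Standard linear algebra then gives, for fixed $H_v, M_v$, a bijection between the set of $\Lambda_v \leq L_v$ projecting to $H_v$ and intersecting $L_{v,2}$ in $M_v$ and the set $\Hom_A(H_v, L_{v,2}/M_v)$, and each such $\Lambda_v$ satisfies $[L_v : \Lambda_v] = [L_{v,1}:H_v] \cdot [L_{v,2}:M_v]$.

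Next I would translate the subrepresentation condition $f_e(\Lambda_{t(e)}) \leq \Lambda_{h(e)}$ into this language. This is where admissibility is decisive: because $0 \oplus L_{t(e),2} \leq \ker f_e$ and $\im f_e \leq 0 \oplus L_{h(e),2}$, the map $f_e$ factors through $\pi_{t(e),1}$ and lands in $L_{h(e),2}$, so for $\lambda \in \Lambda_{t(e)}$ we have $f_e(\lambda) = f_e(\pi_{t(e),1}(\lambda)) \in f_e(H_{t(e)}) \subseteq L_{h(e),2}$. Therefore the inclusion $f_e(\Lambda_{t(e)}) \leq \Lambda_{h(e)}$ becomes $f_e(H_{t(e)}) \leq \Lambda_{h(e)} \cap L_{h(e),2} = M_{h(e)}$. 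Summing over edges with target $v$ collapses the entire compatibility condition to $\bsf(\bsH) \leq \bsM$, a condition purely on $(\bsH, \bsM)$ with no restriction on the extension data $(\phi_v)_{v \in V}$.

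Finally I would assemble the identity. For each pair $(\bsH, \bsM)$ with $\bsf(\bsH) \leq \bsM$, the number of compatible subrepresentations $\boldsymbol{\Lambda}$ equals
\[
\prod_{v \in V} |\Hom_A(H_v, L_{v,2}/M_v)| = \prod_{v \in V} [L_{v,2}:M_v]^{n(v,1)},
\]
since $H_v$ is free of rank $n(v,1)$; combined with the index formula above, every such $\boldsymbol{\Lambda}$ contributes $\prod_{v} [L_{v,1}:H_v]^{-s_v}[L_{v,2}:M_v]^{-s_v}$ to $\zeta_{\bsL}(\bss)$. Multiplying count by weight yields exactly the asserted exponent $n(v,1) - s_v$ on $[L_{v,2}:M_v]$, and reorganizing the sum proves the lemma. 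The only delicate step is the second paragraph, where the admissibility hypothesis is used to decouple the edge-compatibility condition from the extension data; without it, the $\phi_v$ would be constrained by $\bsf$ and the clean factorization would fail. The remaining work is bookkeeping vertex by vertex.
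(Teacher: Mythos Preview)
Your proof is correct and takes essentially the same approach as the paper: both decompose each $\Lambda_v$ into the triple $(H_v, M_v, \text{extension data})$, use admissibility to show that the subrepresentation condition reduces to $\bsf(\bsH) \leq \bsM$ independently of the extension data, and then count. The only difference is cosmetic: the paper fixes bases and uses the Hermite normal form, so that the extension data is the off-diagonal block $C_v$ with $[L_{v,2}:M_v]^{n(v,1)}$ choices, whereas you give the coordinate-free description via $\phi_v \in \Hom_A(H_v, L_{v,2}/M_v)$, arriving at the same count.
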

\begin{proof}
A square matrix with entries in $A$ is said to be in Hermite normal form if it is upper triangular, the diagonal elements are powers of $\pi$, and the entries in a column above a diagonal element $\pi^j$ lie in a fixed set of coset representatives of $A / \pi^j A$.
For every $v \in V$, let $b_{v,1}$ and $b_{v,2}$ be $A$-bases of $L_{v,1}$ and $L_{v,2}$, respectively.  Concatenate them to obtain an $A$-basis of $L_v$.  
For every finite-index $A$-submodule $\Lambda_v \leq L_v$,  there exists a unique matrix $\mathrm{Mat}(\Lambda_v)$ in Hermite normal form such that $\Lambda_v$ is spanned by the rows of $\mathrm{Mat}(\Lambda_v)$, viewed as elements of $L_v$ with respect to the chosen basis; see, for instance,~\cite[Theorem~22.1]{MacDuffee/33}.  Consider $\mathrm{Mat}(\Lambda_v)$ as a block matrix
$$ \mathrm{Mat}(\Lambda_v) = \left( \begin{array}{cc} \mathrm{Mat}(H_v) & C_v \\ 0 & \mathrm{Mat}(M_v) \end{array} \right),$$
where $C_v$ is an $n(v,1) \times n(v,2)$ matrix with entries in $A$, while $H_v \leq L_{v,1}$ and $M_v \leq L_{v,2}$ are finite-index $A$-submodules.  It is clear that $\boldsymbol{\Lambda} = (\Lambda_v)_{v \in V}$ is an $A$-subrepresentation of $\bsL$ if and only if $\bsf(\bsH) \leq \bsM$.  Clearly $[L_v : \Lambda_v] = [L_{v,1} : H_v] [L_{v,2} : M_v]$, and, given $H_v$ and $M_v$, the Hermite normal form admits $[L_{v,2} : M_v]$ possible choices for each of the $n(v,1)$ rows of $C_v$.  The claim follows.
\end{proof}

The next proposition allows us to rewrite the zeta function $\zeta_{\bsL^{\ast m}}(\bss)$ as a product of an explicit factor and an infinite sum in which only the summands depend on $m$, but the set parametrizing them is independent of $m$.

\begin{pro} \label{pro:rewrite}
Let $(\bsL,\bsf)$ be an admissible $A$-representation of $Q$ with rank vector $\boldsymbol{n}$, and let $m \in \N$.  Then
\[ \mathclap{\zeta_{\bsL^{\ast m}}(\bss) = \left( \prod_{v \in V} \frac{\zeta^{\triangleleft A}_{A^{mn(v,1)}}(s_v)}{\zeta^{\triangleleft A}_{A^{n(v,1)}}(ms_v)} \right)\! \sum_{\bsH \leq \bsL_1} \sum_{\substack{\bsM \leq \bsL_2 \\ \bsf(\bsH) \leq \bsM}} \!\left( \prod_{v \in V} [L_{v,1} : H_v]^{-ms_v} [L_{v,2} : M_v]^{mn(v,1) - s_v} \right) .}\]
\end{pro}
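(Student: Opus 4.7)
The plan is to apply Lemma~\ref{lem:lemma6.1} to the amalgamated power $(\bsL^{\ast m}, \bsf^{\ast m})$ and then regroup the resulting double sum by fibering the first-layer submodules $H'_v \leq L_{v,1}^m$ over their ``sum of projections'' $\sum_{i=1}^m \pi_i(H'_v) \leq L_{v,1}$, carrying out each fiber sum via Lemma~\ref{pro:sum.lattice}.

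First I would check that $(\bsL^{\ast m}, \bsf^{\ast m})$ is itself admissible, with decomposition $L^{\ast m}_{v,1} = L_{v,1}^m$ and $L^{\ast m}_{v,2} = L_{v,2}$, and hence with rank vector $n^{\ast m}(v,1) = m\,n(v,1)$, $n^{\ast m}(v,2) = n(v,2)$. Lemma~\ref{lem:lemma6.1} then expresses $\zeta_{\bsL^{\ast m}}(\bss)$ as a sum over pairs $(\bsH',\bsM)$ with $\bsH' \leq \bsL_1^{\ast m}$, $\bsM \leq \bsL_2$, and $\bsf^{\ast m}(\bsH') \leq \bsM$. The next step is to unwind $\bsf^{\ast m}$: since $f_{e,i}$ equals $f_e$ on the $i$-th copy of $L_{t(e),1}$ and vanishes on the other copies, one has $f_{e,i}(H'_{t(e)}) = f_e(\pi_i(H'_{t(e)}))$, and therefore
\[ \bsf^{\ast m}(\bsH')_v \;=\; \sum_{\substack{e \in E \\ h(e) = v}} f_e\!\left(\sum_{i=1}^{m} \pi_i(H'_{t(e)})\right) \;=\; \bsf(\sigma(\bsH'))_v, \]
where $\sigma(\bsH') = (\sigma(H'_v))_{v \in V}$ and $\sigma(H'_v) = \sum_{i=1}^m \pi_i(H'_v) \leq L_{v,1}$. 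In particular, the constraint $\bsf^{\ast m}(\bsH') \leq \bsM$ depends on $\bsH'$ only through $\sigma(\bsH')$.

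The third step is to stratify the outer sum by $\bsH := \sigma(\bsH') \leq \bsL_1$, obtaining
\[ \zeta_{\bsL^{\ast m}}(\bss) = \sum_{\bsH \leq \bsL_1} \sum_{\substack{\bsM \leq \bsL_2 \\ \bsf(\bsH) \leq \bsM}} \left( \prod_{v \in V} [L_{v,2} : M_v]^{mn(v,1) - s_v} \right) \prod_{v \in V} \!\! \sum_{\substack{H'_v \leq L_{v,1}^m \\ \sigma(H'_v) = H_v}} [L_{v,1}^m : H'_v]^{-s_v}. \]
To each vertex-wise inner sum I apply Lemma~\ref{pro:sum.lattice} with $\Omega = L_{v,1}^m$, the decomposition into its $m$ coordinate summands, and the isomorphisms $\varphi_i \colon L_{v,1} \to A^{n(v,1)}$ given by a fixed basis of $L_{v,1}$; this produces exactly the factor $\frac{\zeta^{\triangleleft A}_{A^{mn(v,1)}}(s_v)}{\zeta^{\triangleleft A}_{A^{n(v,1)}}(ms_v)}[L_{v,1}:H_v]^{-ms_v}$. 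Collecting the resulting explicit factor in front of the double sum yields the claim.

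The only mildly subtle point, which I would single out as the key content, is the identification $\bsf^{\ast m}(\bsH') = \bsf(\sigma(\bsH'))$, since this is what ensures that the constraint defining $A$-subrepresentations of $\bsL^{\ast m}$ factors through $\sigma$ and thus decouples the fiber sum (handled by Lemma~\ref{pro:sum.lattice}) from the sum over $(\bsH, \bsM)$. Everything else is bookkeeping.
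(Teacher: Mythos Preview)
Your proposal is correct and follows essentially the same approach as the paper's own proof: apply Lemma~\ref{lem:lemma6.1} to $(\bsL^{\ast m},\bsf^{\ast m})$, observe that $\bsf^{\ast m}(\bsH')_v = \bsf(\sigma(\bsH'))_v$ so the constraint depends only on the sum of projections, stratify by $\bsH = \sigma(\bsH')$, and evaluate the resulting fiber sums vertex by vertex via Lemma~\ref{pro:sum.lattice}. The paper's notation $\boldsymbol{S}(\boldsymbol{\Lambda})$ plays exactly the role of your $\sigma(\bsH')$, and your identification of the ``mildly subtle point'' matches the key displayed computation in the paper's argument.
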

\begin{proof}
For $v \in V$ and $i \in [m]$, let $\pi_{v,i} \colon L_{v,1}^m \to L_{v,1}$ be the projection onto the $i$-th component and let $\varphi_{v,i}\colon \pi_{v,i}(L_{v,1}^m) \stackrel{\sim}{\to} L_{v,1}$ be the natural identification, so that for each $v$ we are in the setup of Lemma~\ref{pro:sum.lattice} with $\Omega= (L^{\ast m})_{v,1} = L_{v,1}^m$.
By Lemma~\ref{lem:lemma6.1} we have
\begin{equation} \label{equ:gss.lemma}
    \zeta_{\bsL^{\ast m}}(\bss) = \sum_{\boldsymbol{\Lambda} \leq \bsL_1^m} \sum_{\substack{\bsM \leq \bsL_2 \\ \bsf^{\ast m}(\boldsymbol{\Lambda}) \leq \bsM}} \left( \prod_{v \in V} [L_{v,1}^m : \Lambda_v]^{-s_v} [L_{v,2} : M_v]^{mn(v,1) - s_v} \right) .
\end{equation}
Given $\boldsymbol{\Lambda} \leq \bsL_1^m$, set $\boldsymbol{S}(\boldsymbol{\Lambda})_v = \sum_{i = 1}^m \varphi_{v,i}(\pi_{v,i}(\Lambda_v))$ for every $v \in V$.  Observe that
$$ \bsf^{\ast m}(\boldsymbol{\Lambda})_v = \sum_{\substack{e \in E \\ h(e) = v}} \sum_{i = 1}^m f_{e,i}(\Lambda_{t(e)}) = \sum_{\substack{e \in E \\ h(e) = v}} f_e \left( \sum_{i = 1}^m \varphi_{t(e),i}(\pi_{t(e),i}(\Lambda_{t(e)})) \right) = \bsf(\boldsymbol{S}(\boldsymbol{\Lambda}))_v.$$

Hence the right-hand side of~\eqref{equ:gss.lemma} may be expressed as 
\[ 
    \sum_{\bsH \leq \bsL_1} \sum_{\substack{\bsM \leq \bsL_2 \\ \bsf(\bsH) \leq \bsM}} \sum_{\substack{\boldsymbol{\Lambda} \leq \bsL_1^m \\ \boldsymbol{S}(\boldsymbol{\Lambda}) = \bsH}} \left( \prod_{v \in V} [L_{v,1}^m : \Lambda_v]^{-s_v} [L_{v,2} : M_v]^{mn(v,1) - s_v} \right).
\]
Our claim follows from Lemma~\ref{pro:sum.lattice}, since $\boldsymbol{S}(\boldsymbol{\Lambda}) = \bsH$ if and only if $\Lambda_v \in S_{H_v}$ for all $v \in V$ in the notation of that lemma.
\end{proof}

\subsection{Restatement in terms of \texorpdfstring{$p$}{p}-adic integrals} \label{sec:restatement}
Let $F$ be the fraction field of $A$, and fix a uniformizer $\pi \in A$.  Then $(F,\pi) \in \mathrm{Loc}$ and $A = \mathcal{O}_F$.  
Let $(\bsL, \bsf)$ be an admissible $A$-representation of $Q$ with rank vector $\boldsymbol{n}$, and consider the infinite sum 
\begin{equation} \label{equ:quantity}
    \sum_{\bsH \leq \bsL_1} \sum_{\substack{\bsM \leq \bsL_2 \\ \bsf(\bsH) \leq \bsM}} \left( \prod_{v \in V} [L_{v,1} : H_v]^{-ms_v} [L_{v,2} : M_v]^{mn(v,1) - s_v} \right),
\end{equation}
which by Proposition~\ref{pro:rewrite} is equal to $\zeta_{\bsL^{\ast m}} (\bss)$ up to an explicit factor. Our aim is to express it as a $p$-adic integral to which the results of Section~\ref{sec:preliminaries} are applicable.  This type of argument, for sums running over subgroups, was introduced in~\cite[\S2]{GSS/88}.  

\begin{dfn}
Let $U$ be a free $A$-module of rank $r$ with a fixed basis $(b_1, \dots, b_r)$.  Set $U_i = \langle b_i, \dots, b_r \rangle_{A}$ for every $i \in [r+1]$.  In particular, $U_{r + 1} = 0$.  
If $\mathcal{M} \leq U$ is a finite-index $A$-submodule, we say that an $A$-basis $(c_1, \dots, c_r)$ of $\mathcal{M}$ is a {\emph{good basis}} if $\langle c_i, \dots, c_r \rangle_{A} = \mathcal{M} \cap U_i$ for every $i \in [r]$.
\end{dfn}
Consider the additive algebraic group $T_r$ of upper triangular $r \times r$ matrices, defined over $\Z$.  For any $F \in \mathrm{Loc}$ we identify $T_r (F)$ with $F^{\binom{r+1}{2}}$ in the natural way and let $\mu_F$ be the additively invariant measure on $T_r (F)$ normalized so that $\mu_F(T_r (\mathcal{O}_F)) = 1$.  For $\mathcal{M} \leq U$ as above, let $T(\mathcal{M}) \subset T_r (A)$ be the set of all matrices $C \in T_r (A)$ such that the rows of $C$, interpreted as elements of $U$ with respect to the $A$-basis $(b_1, \dots, b_r)$, constitute a good basis of $\mathcal{M}$.  The following claim is the analogue in our setting of~\cite[Lemma~15.1.1]{LubotzkySegal/03}; see also~\cite[Lemma~2.5]{GSS/88}.

\begin{lem} \label{lem:measure}
Let $U$ be a free $A$-module of rank $r$, and let $\mathcal{M} \leq U$ be an $A$-submodule of finite index.  Then $T(\mathcal{M})$ is an open subset of $T_r (A)$ and 
\begin{equation*} 
 \mu_F (T(\mathcal{M})) = (1 - q^{-1})^r q^{- \sum_{i = 1}^r i \lambda_i},
\end{equation*}
where $\lambda_i$, for all $i \in [ r ]$, is determined by
$ q^{\lambda_i} = [ U_i  : (\mathcal{M} \cap U_i ) + U_{i + 1}]$.  Moreover, for any $C = (c_{ij} ) \in T(\mathcal{M})$ the following holds:
\begin{equation*}
[U  : \mathcal{M}] = q^{\lambda_1 + \dots + \lambda_r} = \prod_{i = 1}^r |c_{ii} |^{-1}_F .
\end{equation*}
\end{lem}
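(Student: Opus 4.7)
The plan is to first give a clean description of $T(\mathcal{M})$. For each $i \in [r]$ the quotient $(\mathcal{M} \cap U_i)/(\mathcal{M} \cap U_{i+1})$ embeds in $U_i/U_{i+1} \cong A$ via the class of $b_i$, with image equal to the ideal $\pi^{\lambda_i} A$ by the very definition of $\lambda_i$. I will verify that a matrix $C = (c_{ij}) \in T_r(A)$ with rows $c_i = \sum_{j \geq i} c_{ij} b_j \in U_i$ lies in $T(\mathcal{M})$ if and only if (i) $c_{ii} \in \pi^{\lambda_i} A^\times$ for every $i \in [r]$, and (ii) $c_i \in \mathcal{M}$ for every $i$. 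The forward direction is immediate from the definition of a good basis; for the converse I argue by downward induction on $i$ that $\langle c_i, c_{i+1}, \dots, c_r\rangle_A = \mathcal{M} \cap U_i$, using that the image of $c_i$ generates the cyclic quotient $(\mathcal{M} \cap U_i)/(\mathcal{M} \cap U_{i+1})$. Because $\mathcal{M}$ is open in $U$ (having finite index in a free module over a compact DVR) and $\pi^{\lambda_i} A^\times$ is open in $A$, this characterization exhibits $T(\mathcal{M})$ as an intersection of preimages of open sets under continuous projections, proving that $T(\mathcal{M})$ is open.

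Next I will compute $\mu_F(T(\mathcal{M}))$ by iterated integration, treating the rows independently. For each $i$, fix once and for all an element $m_i \in \mathcal{M}$ with $m_i \equiv \pi^{\lambda_i} b_i \pmod{U_{i+1}}$, and write $m_i = \pi^{\lambda_i} b_i + \gamma_i$ with $\gamma_i \in U_{i+1}$. Setting $u = c_{ii}/\pi^{\lambda_i} \in A^\times$, the element $u m_i$ lies in $\mathcal{M}$ and $c_i - u m_i \in U_{i+1}$; hence $c_i \in \mathcal{M}$ if and only if $(c_{i,i+1}, \dots, c_{i,r}) - u \gamma_i \in \mathcal{M} \cap U_{i+1}$, where $U_{i+1}$ is identified with $A^{r-i}$ via the basis $(b_{i+1}, \dots, b_r)$. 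Thus the row-$i$ constraint decouples: the diagonal entry $c_{ii}$ ranges over $\pi^{\lambda_i} A^\times$, contributing measure $(1 - q^{-1}) q^{-\lambda_i}$ in $A$; and, given $c_{ii}$, the off-diagonal tuple is constrained to a single coset of $\mathcal{M} \cap U_{i+1}$ in $U_{i+1}$, contributing measure $[U_{i+1} : \mathcal{M} \cap U_{i+1}]^{-1}$ in $A^{r-i}$.

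Finally I evaluate these indices and combine. Filtering $U_{i+1}/(\mathcal{M} \cap U_{i+1})$ by the images of the $U_j$ for $j > i$ yields successive quotients isomorphic to $A/\pi^{\lambda_j} A$, so $[U_{i+1} : \mathcal{M} \cap U_{i+1}] = q^{\lambda_{i+1} + \cdots + \lambda_r}$; the same argument applied to $U/\mathcal{M}$ gives $[U : \mathcal{M}] = q^{\lambda_1 + \cdots + \lambda_r}$. Multiplying the per-row contributions and simplifying via the identity $\sum_{i} \lambda_i + \sum_{i} \sum_{j > i} \lambda_j = \sum_{j} j \lambda_j$ yields the stated formula for $\mu_F(T(\mathcal{M}))$. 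Since each $c_{ii}$ is an associate of $\pi^{\lambda_i}$, we have $|c_{ii}|_F^{-1} = q^{\lambda_i}$, which matches the product expression for $[U : \mathcal{M}]$. The only real obstacle is bookkeeping: carefully checking that the per-row constraints decouple into the diagonal measure and the off-diagonal coset measure, and that the filtration gives precisely the claimed exponents in the final combinatorial identity.
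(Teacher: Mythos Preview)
Your proposal is correct and follows essentially the same approach as the paper's proof: a row-by-row computation in which the diagonal entry $c_{ii}$ ranges over $\pi^{\lambda_i}A^\times$ (contributing $(1-q^{-1})q^{-\lambda_i}$) and the remaining entries of row~$i$ range over a coset of $\mathcal{M}\cap U_{i+1}$ in $U_{i+1}$ (contributing $q^{-(\lambda_{i+1}+\cdots+\lambda_r)}$). Your version is more explicit --- you give a direct characterization of $T(\mathcal{M})$ and note that the row conditions are genuinely independent rather than merely conditionally so, and you derive $[U:\mathcal{M}]=q^{\lambda_1+\cdots+\lambda_r}$ via the filtration rather than via $|\det C|_F^{-1}$ --- but the underlying argument is the same.
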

\begin{proof}
We construct $T(\mathcal{M})$ inductively from the bottom row up.  For any $i \in [ r ]$ the $i$-th row of a matrix $C \in T(\mathcal{M})$ necessarily has a leading term of additive valuation $v_F(c_{ii}) = \lambda_i$.  This implies the second part of our claim, since $[U  : \mathcal{M}] = | \det C |^{-1}_F$ for any $C \in T(\mathcal{M})$.  If the lower rows of $C$ have been constructed already, then the $i$-th row is well-defined modulo addition of $A$-multiples of the lower rows.  Thus the set of possible matrix entries $c_{ii}$ has measure $(1 - q^{-1})q^{- \lambda_i}$ in $F$, whereas the set of possible $(r-i)$-tuples $(c_{i,i+1}, \dots, c_{ir})$ has measure $q^{-(\lambda_{i+1} + \cdots + \lambda_r)}$ in $F^{r-i}$.
\end{proof}

For every $v \in V$, fix $A$-bases $b^v = (b^{v}_1, \dots, b^v_{n(v,1)})$ of ${L}_{v,1}$ and $\beta^v = (\beta^v_{1}, \dots, \beta^v_{n(v,2)})$ of $L_{v,2}$.  Having chosen these bases, given families $\bsH \leq \bsL_1$ and $\bsM \leq \bsL_2$ 
we can define good bases of $H_v$ and $M_v$ for every $v \in V$.  For every edge $e \in E$ and for every $(i,j) \in [n(t(e),1)] \times [n(h(e),2)]$ let $a^e_{ij} \in A$ be the structure constants satisfying $f_e(b^{t(e)}_i) = \sum_{j = 1}^{n(h(e),2)} a^e_{ij} \beta^{h(e)}_j$.

\begin{dfn} \label{def:good.basis}
Let
$Y_{\bsL}$ be the set of tuples
\[(C^{v,1}, C^{v,2})_v \in \prod_{v \in V} (T_{n(v,1)}(F) \times T_{n(v,2)}(F)) = \prod_{v \in V} \left( F^{\binom{n(v,1)+1}{2} + \binom{n(v,2)+1}{2}} \right)
\]
such that the $A$-linear spans of the rows of $C^{v,1}=(c_{ij}^{v,1})$ and $C^{v,2}=(c_{ij}^{v,2})$, with respect to the bases $b^v$ and $\beta^v$, are finite-index $A$-submodules $H_v \leq L_{v,1}$ and $M_v \leq L_{v,2}$, respectively, satisfying $\bsf(\bsH) \leq \bsM$.  Equivalently, $(C^{v,1}, C^{v,2})_v  \in Y_{\bsL}$ if and only if the following conditions are satisfied:
\begin{itemize}
\item $v_F(c^{v,\ell}_{ij}) \geq 0$ for all $v \in V$ and $\ell \in \{1,2 \}$ and $1 \leq i \leq j \leq n(v,\ell)$.
\item $\det C^{v,\ell} \neq 0$ for all $v \in V$ and $\ell \in \{ 1, 2 \}$.
\item For all $e \in E$ and $(i, j) \in [n(t(e),1)] \times [n(h(e),2)]$ there exists $d_{ij}^e \in F$ such that $v_F (d_{ij}^e) \geq 0$ and the following holds for all $(i, k) \in [n(t(e),1)] \times [n(h(e),2)]$:
\begin{equation} \label{equ:expanded.conditions}
\sum_{\ell = i}^{n(t(e),1)} c_{i \ell}^{t(e),1} a_{\ell k}^e = \sum_{j = 1}^k d_{ij}^e c_{jk}^{h(e),2}.
\end{equation}
\end{itemize}
\end{dfn}

It is clear that the conditions of Definition~\ref{def:good.basis} depend only on the quiver $Q$, the rank vector $\boldsymbol{n}(\bsL)$, and the structure constants $a_{ij}^e \in A$.  Hence they can be expressed by an $\mathfrak{L}_{\boldsymbol{n}}$-formula $\psi((C^{v,1}, C^{v,2})_v)$, where $\boldsymbol{n} = \boldsymbol{n}(\bsL)$ and $\mathfrak{L}_{\boldsymbol{n}}$ is the expanded Denef--Pas language of Section~\ref{sec:language.definition}.
Observe that
\begin{equation} \label{equ:yl}
Y_{\bsL} = \bigcup_{\bsH \leq \bsL_1} \bigcup_{\substack{\bsM \leq \bsL_2 \\ \bsf(\bsH) \leq \bsM}} \prod_{v \in V} T(H_v) \times T({M_v}),
\end{equation}
where the union is disjoint.  Thus the expression~\eqref{equ:quantity} may be rewritten as
\begin{equation} \label{equ:unif.integral}
   \sum_{\bsH \leq \bsL_1} \sum_{\substack{\bsM \leq \bsL_2 \\ \bsf(\bsH) \leq \bsM}} \prod_{v \in V} \int_{T(H_v) \times T({M_v})} \frac{[L_{v,1} : H_v]^{-ms_v} [L_{v,2} : M_v]^{mn(v,1) - s_v}}{\mu_F(T(H_v)) \cdot \mu_F(T({M_v}))} d \mu_F.
\end{equation}

\begin{lem} \label{lem:final.integral}
Let $(\bsL,\bsf)$ be an admissible $A$-representation of $Q$ with rank vector $\boldsymbol{n}$.  Then
\begin{multline*}
\zeta_{\bsL^{\ast m}} (\bss) =  \prod_{v \in V} \left( \frac{\zeta^{\triangleleft A}_{A^{mn(v,1)}}(s_v)}{\zeta^{\triangleleft A}_{A^{n(v,1)}}(ms_v)}
(1 - q^{-1})^{-(n(v,1) + n(v,2))} \right) \times \\
\int_{Y_{\bsL}} \prod_{v \in V} \left( \prod_{i = 1}^{n(v,1)} |c_{ii}^{v,1} |_{F}^{ms_v - i} \right) \left( \prod_{j = 1}^{n(v,2)} |c^{v,2}_{jj} |_{F}^{s_v - mn(v,1) - j} \right) d \mu_F.
\end{multline*}
\end{lem}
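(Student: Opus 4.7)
\medskip

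\noindent\textbf{Proof plan for Lemma~\ref{lem:final.integral}.}

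The plan is to start from Proposition~\ref{pro:rewrite}, which already provides the explicit factor $\prod_{v} \zeta^{\triangleleft A}_{A^{mn(v,1)}}(s_v) / \zeta^{\triangleleft A}_{A^{n(v,1)}}(ms_v)$, and then to convert the remaining double sum into the asserted integral over $Y_{\bsL}$. The standard trick, introduced in~\cite[\S 2]{GSS/88} and already implicit in~\eqref{equ:unif.integral}, is to write a constant summand as the integral of that constant divided by the measure of a set, over that set. Taking the set to be $\prod_v T(H_v) \times T(M_v)$, each summand indexed by $(\bsH,\bsM)$ in Proposition~\ref{pro:rewrite} becomes $\prod_v \int_{T(H_v) \times T(M_v)} \frac{[L_{v,1}:H_v]^{-ms_v} [L_{v,2}:M_v]^{mn(v,1)-s_v}}{\mu_F(T(H_v)) \mu_F(T(M_v))} d\mu_F$.

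Next I would invoke the disjoint union decomposition~\eqref{equ:yl} of $Y_{\bsL}$ indexed by pairs $(\bsH,\bsM)$ with $\bsf(\bsH) \leq \bsM$, which allows the double sum of products of integrals to be combined into a single integral over $Y_{\bsL}$ of a piecewise-defined integrand. The final step is to rewrite this integrand in terms of the diagonal entries $c_{ii}^{v,1}$ and $c_{jj}^{v,2}$ using Lemma~\ref{lem:measure}. Concretely, for a point $(C^{v,1},C^{v,2})_v \in Y_{\bsL}$ belonging to the piece $\prod_v T(H_v) \times T(M_v)$, Lemma~\ref{lem:measure} gives the identifications
\[
[L_{v,1}:H_v] = \prod_{i=1}^{n(v,1)} |c_{ii}^{v,1}|_F^{-1}, \quad \mu_F(T(H_v)) = (1-q^{-1})^{n(v,1)} \prod_{i=1}^{n(v,1)} |c_{ii}^{v,1}|_F^{i},
\]
and analogously for $[L_{v,2}:M_v]$ and $\mu_F(T(M_v))$. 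Substituting these expressions into the integrand and collecting powers produces $\prod_v (1-q^{-1})^{-(n(v,1)+n(v,2))} \prod_i |c_{ii}^{v,1}|_F^{ms_v-i} \prod_j |c_{jj}^{v,2}|_F^{s_v-mn(v,1)-j}$, in which the constant factors $(1-q^{-1})^{-(n(v,1)+n(v,2))}$ pull out of the integral to combine with the explicit prefactor from Proposition~\ref{pro:rewrite}, yielding the claimed formula.

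The only place where care is required is the bookkeeping in the exponent substitution, in particular verifying that the index $[L_{v,1}:H_v]$ appearing with exponent $-ms_v$ combines with $\mu_F(T(H_v))^{-1}$ to give the overall exponent $ms_v - i$ on $|c_{ii}^{v,1}|_F$, and similarly that $[L_{v,2}:M_v]^{mn(v,1)-s_v} / \mu_F(T(M_v))$ produces the exponent $s_v - mn(v,1) - j$ on $|c_{jj}^{v,2}|_F$; this is a direct verification from Lemma~\ref{lem:measure}. There is no genuine obstacle beyond this bookkeeping: the admissibility of $(\bsL,\bsf)$ has already been used in Lemma~\ref{lem:lemma6.1} to separate the $L_{v,1}$ and $L_{v,2}$ contributions, and the compatibility of $\bsf(\bsH) \leq \bsM$ with the matrix parametrization of $Y_{\bsL}$ is encoded in Definition~\ref{def:good.basis}.
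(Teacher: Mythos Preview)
Your proposal is correct and follows essentially the same approach as the paper's own proof, which simply states that the equality follows from Proposition~\ref{pro:rewrite} and the disjoint decomposition~\eqref{equ:yl} after substituting the claims of Lemma~\ref{lem:measure} into~\eqref{equ:unif.integral}. Your expanded account of the exponent bookkeeping is accurate and fills in exactly the details the paper leaves implicit.
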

\begin{proof}
The equality follows from Proposition~\ref{pro:rewrite} and the observation~\eqref{equ:yl}, after substituting the claims of Lemma~\ref{lem:measure} into~\eqref{equ:unif.integral}.
\end{proof}

\subsection{Proof of Theorem~\ref{thm:quiver.main}} \label{sec:quiver.main.proof}
Let $(\bsL,\bsf)$ be an admissible $\Z$-representation of $Q$.  For every $v \in V$, fix $\Z$-bases $b^v$ of $L_{v,1}$ and $\beta^v$ of $L_{v,2}$.  Recall the structure constants $a_{ij}^e \in \Z$ for $e \in E$ and $(i,j) \in [n(t(e),1)] \times [n(h(e),2)]$.  For every $F \in \mathrm{Loc}$, the $\mathcal{O}_F$-representation $\bsL \tensor_{\Z} \mathcal{O}_F$ is determined by the constants $a_{ij}^e$, interpreted as elements of $\mathcal{O}_F$.  Set 
$$Y_F = Y_{\bsL \tensor_{\Z} \mathcal{O}_F} = \left\{ (C^{v,1}, C^{v,2})_v \in \prod_{v \in V} \left( F^{\binom{n(v,1)+1}{2} + \binom{n(v,2)+1}{2}} \right) : F \models \psi((C^{v,1}, C^{v,2})_v) \right\}$$
as in Definition~\ref{def:good.basis}.  The collection $Y = (Y_F)_{F \in \mathrm{Loc}}$ is then an $\mathfrak{L}_{\mathrm{DP}}$-definable set; note that the structure constants $a_{ij}^e$ appearing in~\eqref{equ:expanded.conditions} are fixed integers and may be expressed by repeated addition, so that the expanded language $\mathfrak{L}_{\boldsymbol{n}}$ is not needed in this case.

We treat $m$ as a formal complex variable and apply Proposition~\ref{pro:bdop} to the statement of Lemma~\ref{lem:final.integral}, where we take $n = 2 |V| + 1$ and the variables $s_1, \dots, s_n$ to be $-m$ in addition to $s_v$ and $ms_v$ for all $v \in V$.  To see that the hypotheses of Proposition~\ref{pro:bdop} are satisfied, set $O = \mathcal{O}_F^{\binom{n(v,1) + 1}{2} + \binom{n(v,2) + 1}{2}}$ and note that the integrand of the integral in Lemma~\ref{lem:final.integral} is positive if the $s_v$ are real.  In this case the integral is bounded by
\[
\int_{O} \prod_{v \in V} \left( \prod_{i = 1}^{n(v,1)} |c_{ii}^{v,1} |_{F}^{ms_v - i} \right) \left( \prod_{j = 1}^{n(v,2)} |c^{v,2}_{jj} |_{F}^{s_v - mn(v,1) - j} \right) d \mu_F,
\]
which converges when $ms_v - n(v,1) > 0$ and $s_v - mn(v,1) - n(v,2) > 0$ for all $v \in V$.  Thus the integral converges for all $\bss \in \CC^{2 |V| + 1}$ such that $\mathrm{Re} \, \bss$ lies in the generalized half-space cut out by the conditions $\mathrm{Re} \, ms_v > n(v,1)$ and $\mathrm{Re} \, s_v + n(v,1) \mathrm{Re} \, (-m) > n(v,2)$.  These are satisfied, for instance, when $\mathrm{Re} \, s_v > m n(v,1) + n(v,2)$ for all $v \in V$.

By Proposition~\ref{pro:bdop} we conclude that there exists $M \in \N$, depending on $(\bsL,\bsf)$, such that for all $F \in \mathrm{Loc}_M$ we have 
\begin{multline} \label{equ:intermediate}
\zeta_{(\bsL \tensor_{\Z} \mathcal{O}_F)^{\ast m}} (\bss) = \\
\prod_{v \in V} \left( \frac{\zeta^{\triangleleft \mathcal{O}_F}_{\mathcal{O}_F^{mn(v,1)}}(s_v)}{\zeta^{\triangleleft \mathcal{O}_F}_{\mathcal{O}_F^{n(v,1)}}(ms_v)} \cdot
    \frac{q_F^{n(v,1) + n(v,2)}}{(q_F - 1)^{n(v,1) + n(v,2)}} \right) \cdot \sum_{i=1}^{\unifmax}m_{i}(k_{F})\cdot \widetilde{W}_{i}(q_{F},q_F^m,(q_{F}^{-s_v},q_F^{-ms_v})_{v \in V})
 \end{multline}
for rational functions $\widetilde{W}_1, \dots, \widetilde{W}_{\unifmax} \in \Q(X,Y_0,(Y_{1,v},Y_{2,v})_{v \in V})$ and for $g$-ary formulae $\psi_1, \dots, \psi_{\unifmax}$, where $m_i(k_F) = | \{ \xi \in k_F^g : k_F \models \psi_i(\xi) \} |$.  
Here $q_F$ is the cardinality of the residue field $k_F$ of $F$.
Set
\[W_i(X,Y_0, \boldsymbol{Y}_1, \boldsymbol{Y}_2) = \widetilde{W}_i(X,Y_0, \boldsymbol{Y}_1, \boldsymbol{Y}_2) \prod_{v \in V} \left( \!\left( \frac{-X}{1-X} \right)^{n(v,1) + n(v,2)} \cdot \!\prod_{j = 0}^{n(v,1)-1} \!(1 - X^j Y_{2,v}) \right)\!.\] 
Recall that any complete discrete valuation ring with finite residue field is the valuation ring of a local field.  We deduce from~\eqref{equ:abelian} and~\eqref{equ:intermediate} that these rational functions, and the formulae $\psi_1, \dots, \psi_r$, satisfy the statement of Theorem~\ref{thm:quiver.main}.  

It remains to show that the bound $M$ on the residue characteristic may be chosen in a way that depends only on the rank vector $\boldsymbol{n}$; this will be done in Remark~\ref{rmk:nguyen.to.main} below.

\begin{rem}
The argument above relies on Proposition~\ref{pro:bdop}, whose proof in turn uses relatively recent uniformity results as in~\cite{CGH/14} to obtain a statement holding for all compact discrete valuation rings $A$ of sufficiently large residue characteristic.  Weaker versions of the theorem can be obtained by the same argument, using older uniformity results; for instance, the original work of Denef~\cite{Denef/84}, as well as~\cite{Macintyre/90, Pas/89}, considered $A = \Z_p$ for sufficiently large $p$, while~\cite{CluckersLoeser/15} considered all $A$ of characteristic zero with bounded ramification.
\end{rem}

\subsection{Proof of Theorem~\ref{thm:quiver.local}} \label{sec:proof.quiver}
First let $A$ be a compact discrete valuation ring of characteristic zero, and let
$F \in \mathrm{Loc}^0$ be its fraction field.  Let $(\bsL,\bsf)$ be an admissible $A$-representation of $Q$.  The domain of integration $Y_{\bsL}$ of Definition~\ref{def:good.basis} is semi-algebraic by~\cite[Lemma~2.1]{Denef/86}.  Hence Theorem~\ref{thm:quiver.local} follows in this case by applying Proposition~\ref{pro:rationality} to the statement of Lemma~\ref{lem:final.integral} and proceeding analogously to the end of the proof of Theorem~\ref{thm:quiver.main}.

In order to treat equal-characteristic local fields of sufficiently large characteristic, we need a bit more technology.  

Recall the expanded Denef--Pas language $\mathfrak{L}_{\boldsymbol{n}}$ and the collection $\mathrm{LocRep}$ from Section~\ref{sec:language.definition}.  
Definition~\ref{def:good.basis}
makes sense for any collection $\boldsymbol{a}$ of constants $a_{ij}^e \in F$ and thus gives an $\mathfrak{L}_{\boldsymbol{n}}$-definable set $\{ Y_{(F, \pi_F, \boldsymbol{a})} \}_{(F, \pi_F, \boldsymbol{a}) \in \mathrm{LocRep}}$.  The sets $Y_{\bsL}$ of Definition~\ref{def:good.basis} are the members of this collection parametrized by triples $(F, \pi_F, \boldsymbol{a})$ such that all elements of $\boldsymbol{a}$ lie in the valuation ring $A$ of $F$.

Given $(F,\pi_F,\boldsymbol{a}) \in \mathrm{LocRep}$, let $(\widetilde{\bsL}, \widetilde{\bsf})$ be the associated $F$-representation of $Q$.  For every $v \in V$, let $L_{v,1}$ be the $A$-submodule of $\widetilde{L}_{v,1}$ generated by the basis $b^v$, and define $L_{v,2}$ analogously.
Consider the $\mathfrak{L}_{\boldsymbol{n}}$-formula $\varphi \left( (C^{v,1}, C^{v,2}, D^{v,1}, D^{v,2}, t_{v,1},t_{v,2})_{v \in V}, u \right)$, where $u \in \Z$ and $t_{v,i} \in \Z$, while $C^{v,i},D^{v,i} \in F^{\binom{n(v,i)+1}{2}}$ for $v \in V$ and $i \in [2]$, expressing the following:
 there exist $\boldsymbol{H} \leq \bsL_1$ and $\boldsymbol{M} \leq \bsL_2$ such that:
\begin{itemize}
\item
$[L_{v,1} : H_v] = q_F^{t_{v,1}}$ for all $v \in V$;
\item
$[L_{v,2} : M_v] = q_F^{t_{v,2}}$ for all $v \in V$;
\item
$\prod_{v \in V} [L_{v,2}: M_v]^{n(v,1)} = q_F^u$;
\item
$(C^{v,1}, C^{v,2})_v, (D^{v,1}, D^{v,2})_v \in \prod_{v \in V} (T(H_v) \times T(M_v)) \subset Y_{(F,\pi_F,\boldsymbol{a})}$.
\end{itemize}
More precisely, $\varphi$ may be taken to be the conjunction of the conditions:
\begin{itemize}
\item
$(C^{v,1}, C^{v,2})_v, (D^{v,1}, D^{v,2})_v \in Y_{(F,\pi_F,\boldsymbol{a})}$;
\item
$\sum_{i = 1}^{n(v,\ell)} v_F(c_{ii}^{v,\ell}) = \sum_{i = 1}^{n(v,\ell)} v_F(d_{ii}^{v,\ell}) = t_{v,\ell}$ for all $v \in V$ and $\ell \in [2]$;
\item
$u = \sum_{v \in V} n(v,1) t_{v,2}$;
\item
For every $v \in V$ and $\ell \in [2]$ there exists $B^{v,\ell} \in T_{n(v,\ell)}(\mathcal{O}_F)$ with unit diagonal entries such that $C^{v,\ell} = B^{v,\ell} D^{v,\ell}$; this condition is obviously $\mathfrak{L}_{\mathrm{DP}}$-definable.
\end{itemize}

The formula $\varphi$ affords an $\mathfrak{L}_{\boldsymbol{n}}$-definable equivalence relation.  Set $\boldsymbol{t}_i = (t_{v,i})_{v \in V}$ for $i \in [2]$, and let $\delta_{\varphi, \boldsymbol{a}, \boldsymbol{t}_1, \boldsymbol{t}_2, u}$ be the number of equivalence classes.  This is the number of pairs $(\bsH,\bsM)$, where $\bsH \leq \bsL_1$ and $\bsM \leq \bsL_2$ and $\widetilde{\bsf}(\bsH) \leq \bsM$, such that $[L_{v,1} : H_v] = q_F^{t_{v,1}}$ and $[L_{v,2} : M_v] = q_F^{t_{v,2}}$ for all $v \in V$.  
If all the structure constants $a_{ij}^e$ lie in $A$, then $(\bsL, \widetilde{\bsf}_{| \bsL})$ is an admissible $A$-representation of $Q$, and all admissible $A$-representations arise in this way.

By~\cite[Theorem~4.1.1]{Nguyen/19}, whose hypotheses hold by~\cite[Example~2.3.7]{Nguyen/19}, there exists $M_{\boldsymbol{n}} \in \N$ such that the Poincar\'{e} series 
$$\sum_{(\boldsymbol{t}_1, \boldsymbol{t}_2, u) \in \N_0^{2|V| + 1}} \delta_{\varphi, \boldsymbol{a}, \boldsymbol{t}_1, \boldsymbol{t}_2, u} T_0^{u} T_{1,v}^{t_{v,1}} T_{2,v}^{t_{v,2}}$$
may be expressed as a rational function $\widetilde{W}_{\boldsymbol{a}} \in \Q(T_0, (T_{1,v}, T_{2,v})_{v \in V})$ with denominator of the requisite form provided that the residue characteristic of $F$ is at least $M_{\boldsymbol{n}}$.  Note that we absorb the cardinalities denoted by $\# Y(k_K)$ in the statement of~\cite[Theorem~4.1.1]{Nguyen/19} into the rational coefficients of the numerator.  Alternatively, Theorem~6.1 and Corollary~6.8 of~\cite{HMR/18} may be used in place of~\cite[Theorem~4.1.1]{Nguyen/19}, with the observation that, by the Ax--Kochen--Ershov principle, the proof of~\cite[Corollary~6.8]{HMR/18} applies to all local fields of sufficiently large positive characteristic. 
If $(\bsL, \bsf)$ is an admissible $A$-representation of $Q$ with structure constants $\boldsymbol{a}$, then we write $\widetilde{W}_{\bsL}$ for $\widetilde{W}_{\boldsymbol{a}}$.
Then 
$$\zeta_{\bsL^{\ast m}} (\bss) = \prod_{v \in V} \left( \frac{\zeta^{\triangleleft A}_{A^{mn(v,1)}}(s_v)}{\zeta^{\triangleleft A}_{A^{n(v,1)}} (ms_v)} \right) \widetilde{W}_{\bsL}(q^m, (q^{-s_v}, q^{-ms_v})_{v \in V})$$
by Proposition~\ref{pro:rewrite}, from which our claim follows.

\begin{rem} \label{rmk:nguyen.to.main}
Theorem~\ref{thm:quiver.main} may also be deduced via an application of~\cite[Theorem~4.1.1]{Nguyen/19} as in the proof of Theorem~\ref{thm:quiver.local}.  In fact, such an argument provides the additional information that the bound $M$ in the statement of Theorem~\ref{thm:quiver.main} depends only on the quiver $Q$ and the rank vector $\boldsymbol{n}$ of the representation $(\bsL,\bsf)$, as claimed.  We chose in Section~\ref{sec:quiver.main.proof} above to present an argument using Proposition~\ref{pro:bdop}, to illustrate the interplay between the model-theoretic results we invoke and the algebraic and combinatorial arguments.  Model-theoretic claims may often be treated as ``black boxes'' that can be swapped into existing proofs to obtain variants, refinements, or generalizations of their conclusions.

In the proof of Theorem~\ref{thm:quiver.local}, by contrast, as far as we are aware we really need the power of the results of~\cite{HMR/18, Nguyen/19} to treat cases where $A$ has positive characteristic.  
\end{rem}

\enlargethispage{-3\baselineskip}
\begin{acknowledgements}
We are very grateful to Itay Glazer, Immanuel Halupczok, Seungjai Lee, and Christopher Voll for helpful correspondence and comments on earlier versions of this work and to the anonymous referees for suggestions that have improved the exposition.
\end{acknowledgements}

\def\cprime{$'$}
\providecommand{\bysame}{\leavevmode\hbox to3em{\hrulefill}\thinspace}
\providecommand{\MR}{\relax\ifhmode\unskip\space\fi MR }
\providecommand{\MRhref}[2]{%
  \href{http://www.ams.org/mathscinet-getitem?mr=#1}{#2}
}
\providecommand{\href}[2]{#2}

\end{document}